\theoremstyle{plain}
\newtheorem{question}{Question}
\newtheorem{theorem}[question]{Theorem}
\newtheorem{proposition}[question]{Proposition}
\newtheorem{lemma}[question]{Lemma}
\newtheorem{remark}[question]{Remark}
\theoremstyle{definition}
\newtheorem{definition}[question]{Definition}
\newtheorem*{question*}{Question}
\numberwithin{question}{section}
\numberwithin{equation}{section}
\newtheorem*{theorem*}{Theorem}
\title{Maker-Breaker Percolation Games II: Escaping to Infinity}
\author{A. Nicholas Day\thanks{Institutionen f\"or matematik och matematisk statistik, Ume{\aa} Universitet, 901 87 Ume{\aa}, Sweden. Emails:  \texttt{a.nick.day@gmail.com} and \texttt{victor.falgas-ravry@umu.se}. Research supported by Swedish Research Council grant 2016-03488.} \and Victor Falgas--Ravry\footnotemark[1]}
\begin{document}
\maketitle
\begin{abstract}
Let $\Lambda$ be an infinite connected graph, and let $v_0$ be a vertex of $\Lambda$. We consider the following positional game. Two players, Maker and Breaker, play in alternating turns. Initially all edges of $\Lambda$ are marked as unsafe. On each of her turns, Maker marks $p$ unsafe edges as safe, while on each of his turns Breaker takes $q$ unsafe edges and deletes them from the graph. Breaker wins if at any time in the game the component containing $v_0$ becomes finite. Otherwise if Maker is able to ensure that $v_0$ remains in an infinite component indefinitely, then we say she  has a winning strategy. This game can be thought of as a variant of the celebrated Shannon switching game.  Given $(p,q)$ and $(\Lambda, v_0)$, we would like to know: which of the two players has a winning strategy?

Our main result in this paper establishes that when $\Lambda = \mathbb{Z}^2$ and $v_0$ is any vertex, Maker has a winning strategy whenever $p\geq 2q$, while Breaker has a winning strategy whenever $2p\leq q$. In addition, we completely determine  which of the two players has a winning strategy for every pair $(p,q)$ when $\Lambda$ is an infinite $d$-regular tree. Finally, we give some results for general graphs and lattices and pose some open problems.\\
\textbf{2010 AMS subject classification:}  05C57 (primary); 05D99; 91A46. 
\end{abstract}
\section{Introduction}\label{section: introduction}
Let $\Lambda$ be an infinite connected (multi)graph, and let $v_0$ be a vertex of $\Lambda$. We consider the following positional game, which we call the \emph{$(p,q)$-percolation game on $(\Lambda, v_0)$}. 
\begin{definition}[$(p,q)$-percolation game]\label{definition: percolation game}
Two players, Maker and Breaker, play in alternating turns, with Maker playing first. Initially all edges of $\Lambda$ are marked as unsafe. On each of her turns, Maker marks $p$ unsafe edges as safe, while on each of his turns Breaker takes $q$ unsafe edges and deletes them from the graph. Breaker wins if at any time in the game the component containing $v_0$ becomes finite. Otherwise if Maker is able to ensure that $v_0$ remains in an infinite component indefinitely, then we say she  has a winning strategy. 
\end{definition}
If $\Lambda$ is a vertex-transitive graph, then the choice of $v_0$ does not matter, and we simply speak of the $(p,q)$-percolation game on $\Lambda$.  Our main concern in this paper is to determine for given $(p,q)$ and $(\Lambda, v_0)$ which of the two players has a winning strategy in the corresponding $(p,q)$-percolation game.

\subsection{Main results}\label{subsection: results and organisation of the paper}
The \emph{$d$-dimensional integer lattice} is the graph with vertex set $\mathbb{Z}^d$ whose edges consist of pairs of vertices $\mathbf{v}, \mathbf{w}\in \mathbb{Z}^d$ lying at Euclidean distance $\| \mathbf{v}-\mathbf{w}\|=1$ from each other.  In a standard abuse of notation, we write $\mathbb{Z}^d$ to denote this graph.  Our main results in this paper are the following theorems.

\begin{theorem}\label{theorem: (p,p)-game on integer lattice}
Maker has a winning strategy for the $(1,1)$-percolation game on $\mathbb{Z}^{2}$.  More generally, Maker has a winning strategy for the $(p,p)$-percolation game on $\mathbb{Z}^{d}$ for every integer $1\leq p < d$.
\end{theorem}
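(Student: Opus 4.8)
The plan is to derive the statement about $\mathbb{Z}^2$ from the general one, since the $(1,1)$-game on $\mathbb{Z}^2$ is precisely the case $p=1$, $d=2$; so it suffices to give Maker a winning strategy in the $(p,p)$-percolation game on $\mathbb{Z}^d$ whenever $1\le p<d$. I would first recast the game from Breaker's point of view: at any stage the current graph is $\mathbb{Z}^d$ with the set $D$ of edges Breaker has so far deleted removed, and Breaker has won exactly when the component $S$ of $v_0$ in this graph is finite. But then every edge of the edge-boundary $\partial S$ has already been deleted, so $|\partial S|\le|D|$. Hence it is enough for Maker to maintain, at all times, that the component of $v_0$ has edge-boundary strictly larger than $|D|$.

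Maker's strategy will be to construct, out of safe edges alone, an ever-lengthening \emph{monotone ray} from $v_0$: a self-avoiding path each of whose steps increases one coordinate by $1$. On each of her turns she spends all $p$ of her moves extending the current ray by $p$ further edges, choosing the heading of each new edge adaptively so as to stay clear of Breaker's deletions. After $t$ rounds this produces a safe --- and so un-deletable --- monotone path from $v_0$ to a tip $u_t$ with $\|u_t-v_0\|_1=pt$, whereas Breaker has by then deleted only $qt=pt$ edges.

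To turn this into a win I would invoke a standard isoperimetric inequality in $\mathbb{Z}^d$: for $d\ge 2$, every finite connected vertex set $S\subseteq V(\mathbb{Z}^d)$ and all $a,b\in S$ satisfy $|\partial S|\ge 2\|a-b\|_1$. One way to see this is to write $|\partial S|=2d|S|-2|E(S)|$, bound the number of edges of $S$ in each coordinate direction column by column to get $|E(S)|\le d|S|-\sum_i n_i$, where $n_i$ is the number of columns in direction $i$ meeting $S$, and observe that $n_i$ exceeds the width $w_i(S)$ of $S$ in every other direction, so that $\sum_i n_i\ge\sum_i w_i(S)\ge\|a-b\|_1$. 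Applying this with $a=v_0$, $b=u_t$: were $S$, the component of $v_0$, ever finite, then $v_0$ and $u_t$ would both lie in $S$ (the ray is safe, hence undeleted, and joined to $v_0$), giving $|\partial S|\ge 2pt$; but $\partial S\subseteq D$ and $|D|\le pt$, a contradiction. So $v_0$ stays in an infinite component forever and Maker wins.

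The one ingredient that requires genuine work --- and the step I expect to be the main obstacle --- is showing that Maker really can keep extending her ray by $p$ edges on every single turn; equivalently, that Breaker can never seal off the forward cone from Maker's current tip. The intuition is that to cut Maker off from forward infinity Breaker must complete a \emph{monotone edge-cut}, and any such cut contains at least $d$ edges (there are $d$ pairwise edge-disjoint monotone rays out of every vertex), while Breaker has only $q=p<d$ moves per turn; since Maker moves first in each round and can choose her heading adaptively, Breaker is forever one step behind at the frontier and cannot finish such a wall in time. Making this rigorous amounts to a careful charging of Breaker's total move count against both the size of, and how far ahead of Maker's tip, any cut he might be assembling --- and it is exactly here that the hypothesis $p<d$ enters.
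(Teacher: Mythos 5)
Your isoperimetric reduction is sound as far as it goes: the inequality $\vert\partial S\vert \ge 2\Vert a-b\Vert_1$ for finite connected $S\subseteq\mathbb{Z}^d$ ($d\ge 2$) is correct, and if Maker really could maintain a safe doubly monotone path of length $pt$ after $t$ rounds, then $\partial S\subseteq D$ and $\vert D\vert\le pt$ would give the contradiction you want. The genuine gap is exactly the step you flag yourself: the claim that Maker can extend a monotone ray by $p$ edges on every turn. This is not a technicality awaiting ``a careful charging''; it is almost certainly false, and the heuristic you give for it --- a monotone cut needs at least $d$ edges while Breaker has only $p<d$ deletions per turn, so he is ``forever one step behind at the frontier'' --- is precisely the frontier-racing reasoning that fails in this family of games. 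Breaker need not complete a cut near the tip within one turn: he can accumulate deletions far ahead, inside the cone of positions your ray is still able to reach. Because a monotone ray can never back up or leave that cone, distant partial walls are never wasted for him; as the tip advances, the window of possible crossing points of a fixed far-away anti-diagonal shrinks, and a funnel/converging-barrier construction in the style of Conway's proof that the ``Fool'' (an Angel constrained to move monotonically) can be caught regardless of its power shows that a monotone ray can be trapped even by a Breaker with far fewer moves per turn than Maker. The paper itself points to this phenomenon (Breaker ``able to lay distant traps for her just as the Devil does for the Angel''), and the hypothesis $p<d$ does not rescue a strategy that commits Maker to a rigid monotone structure. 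So the central claim of your proposed strategy is unproved and, in my view, not salvageable in this form.

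For comparison, the paper's proof avoids committing Maker to any fixed geometric path. It colours the edges of $\mathbb{Z}^d$ by axis direction, observes that this is a $d$-path-colouring (every vertex lies on an infinite monochromatic path in each colour), and proves a repair lemma: whenever Breaker deletes an edge of colour $i$, Maker can claim (contract) one edge of any prescribed other colour $j$ so that the resulting contracted graph is again $d$-path-colourable. Answering Breaker's $p\le d-1$ deletions edge by edge with such repairs keeps every vertex, not just $v_0$, in an infinite component forever. The moral is that Maker's safe structure must be flexible enough to be re-routed in response to distant deletions, rather than a single ray she is obliged to extend at a fixed rate; if you want to keep your isoperimetric framing, you would first need a Maker strategy producing two safe, far-apart points that does not rely on monotone extension, and at that point you have essentially to reprove something like the paper's colouring lemma.
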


\begin{theorem}\label{theorem: (2q,q)-game on integer lattice}
Let $p,q\in \mathbb{N}$.  If $p \geqslant 2q$, then Maker has a winning strategy for the $(p,q)$-percolation game on $\mathbb{Z}^{2}$.
\end{theorem}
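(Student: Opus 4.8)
The plan is to exhibit an explicit strategy by which Maker constructs, turn by turn, an ever-growing finite \emph{safe} path $P_t$ starting at $v_0$, in such a way that the $x$-coordinate of the far endpoint of $P_t$ tends to infinity. Since safe edges are never deleted, the union $\bigcup_t P_t$ is then an infinite path of safe edges through $v_0$, so $v_0$ lies in an infinite component forever and Maker wins. Note also that it suffices to treat the single rate $p=2q$: if $p>2q$ Maker can ignore $p-2q$ of her marks each turn (or spend them on edges of her current safe path), and extra safe edges and fewer Breaker deletions can only help her, so this monotonicity reduces the theorem to $p=2q$.

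Maker's strategy maintains a \emph{plan}: an infinite self-avoiding lattice path $Q_t$ from $v_0$ that is weakly $x$-monotone (its steps are $(+1,0)$, $(0,+1)$ or $(0,-1)$, never $(-1,0)$), that agrees with $P_t$ on an initial segment from $v_0$ up to the current \emph{frontier} vertex $h_t$, whose edges after $h_t$ are all currently present, and which eventually runs straight right. On her turn Maker walks along $Q_t$ from $h_t$ and marks safe the first $p=2q$ unsafe edges she meets, advancing the frontier; if she reaches a deleted edge of $Q_t$ first, she performs a \emph{reroute}: around a deleted horizontal edge $\{(a,b),(a+1,b)\}$ she replaces the tail of the plan from $(a,b)$ by "step once in a $\pm y$ direction, then continue straight right one row over", and a deleted vertical edge is handled similarly by stepping back in $x$ by at most one column. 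The crucial accounting claim is that each edge Breaker deletes forces Maker to insert at most $2$ extra edges into her eventual safe path (one perpendicular step to pass the obstruction, at most one more to realign), so that the "repair" marks Maker is ever forced to spend amount to at most $2q$ per Breaker turn — her whole budget — while on any turn where Breaker fails to obstruct the next $2q$ edges of $Q_t$ the frontier advances by $2q$ in the $x$-direction. Making this precise needs a potential function charging Breaker's deletions against future reroutes and showing the charge is at most $2$ per edge even when deletions cluster, land on edges Maker was about to claim, or chase the point at which Maker reroutes.

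The key steps, in order, are: (i) set up the plan/frontier/reroute formalism and check that a reroute always yields a valid weakly $x$-monotone plan through currently-present edges; (ii) prove the amortised-cost lemma, i.e.\ that with budget $2q$ Maker can simultaneously repair and not fall behind — equivalently, that when Breaker pours all $q$ of his deletions per turn into a "vertical wall" ahead of the frontier, Maker's rate $p=2q$ lets her climb past the top of any such wall in finitely many turns (a pursuit argument comparing Maker's vertical climbing speed against the wall's growth speed); (iii) conclude that the frontier's $x$-coordinate is nondecreasing and unbounded, so $\bigcup_t P_t$ is infinite and Maker wins.

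I expect step (ii), the worst-case bookkeeping, to be the main obstacle. Breaker may interleave attacks on the current frontier with pre-emptive deletions far ahead, delete the very edges a reroute was about to use (forcing a cascade of reroutes walking back along the plan), and keep relocating his wall; the plan-updating rule and the potential must be chosen carefully enough that none of this pushes the amortised repair cost above $2$ per Breaker edge, nor pins the frontier at a bounded $x$-coordinate. An alternative route, if the direct bookkeeping becomes unwieldy, is a multi-scale (renormalisation) argument in which Maker maintains safe crossings of a hierarchy of nested square annuli around $v_0$ and rebuilds any crossing Breaker destroys; there the factor $2$ enters as the constant needed to sum the per-scale repair rates against a geometrically growing sequence of scales. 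Either way, planar duality offers a sanity check: Breaker disconnects $v_0$ only by completing, in the dual lattice, a cycle of duals of deleted edges encircling $v_0$, and a safe escape path of Maker's is precisely an obstruction to any such cycle.
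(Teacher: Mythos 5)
Your proposal is a strategy sketch whose central claim is precisely the part that is not proved, and it is the part most likely to fail. The amortised-cost lemma in step (ii) --- ``each edge Breaker deletes forces Maker to insert at most $2$ extra edges'' --- is not a local fact: a single deletion placed in the right spot of a wall far ahead of the frontier can force an unbounded cascade of vertical detours, and no potential function is exhibited that charges such detours back to deletions at rate at most $2$, even amortised. Worse, your commitment to a weakly $x$-monotone plan makes Maker the analogue of the ``fool'' in Conway's Angel--Devil game: because she never moves left, Breaker does not need a dual cycle around the origin to trap her --- a finite C-shaped dual barrier anchored on the column of the current frontier (a vertical wall far ahead plus two horizontal arms) already separates the frontier from infinity inside her reachable half-plane. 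Whether a $2q$-versus-$q$ monotone Maker can always outrun such anchored traps is exactly the kind of pursuit question the paper flags as dangerous (``Breaker able to lay distant traps for her just as the Devil does for the Angel''), and your text itself concedes that the bookkeeping against interleaved, relocating walls is unresolved. The alternative multi-scale annulus-rebuilding idea is likewise only gestured at. So as it stands the proof has a genuine gap at its key lemma, and the specific monotonicity restriction may make the gap unfixable without changing the strategy.

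For comparison, the paper's argument is defensive rather than constructive and is much shorter, at the price of invoking a result from the prequel. Maker spends her first turn claiming the vertical path from $(0,0)$ to $(0,p)$; by planar duality, Breaker can only win by completing a dual cycle around this path, which would have to contain a top--bottom dual crossing of the strip $\mathbb{Z}\times P_p$. Maker then plays, inside this strip, the horizontal player's winning strategy for the $q$-double-response game (Proposition~\ref{proposition: double-response}, valid since $p\geq 2q\geq q+1$), answering each batch of $r\leq q$ Breaker edges with $2r$ edges of her own; this forever prevents the dual crossing, hence the origin's component never becomes finite. Note that Maker never needs to build an infinite safe path at all --- her claimed edges act as blockers of Breaker's dual paths --- which is how the paper sidesteps exactly the pursuit/bookkeeping difficulties your escape-path construction runs into. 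If you want to salvage your approach, you would need either to drop the monotonicity constraint (and then control how Maker safely backtracks) or to prove the anchored-wall pursuit lemma outright; alternatively, prove a crossing/blocking lemma of the double-response type, at which point you have essentially reconstructed the paper's route.
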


\begin{theorem}\label{theorem: (p,2p)-game on integer lattice}
Let $p,q\in \mathbb{N}$.  If $q \geqslant 2p$, then Breaker has a winning strategy for the $(p,q)$-percolation game on $\mathbb{Z}^{2}$.
\end{theorem}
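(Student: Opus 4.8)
The plan is to pass to the planar dual. Write $G_t$ for the graph remaining at time $t$ (all edges of $\mathbb{Z}^2$ except those Breaker has deleted). Then $v_0$ lies in an infinite component of $G_t$ if and only if there is no cycle of the dual lattice $(\mathbb{Z}^2)^\ast$ enclosing $v_0$ all of whose dual edges cross deleted primal edges. So Breaker wins precisely when, at some finite time, the deleted edges contain a \emph{contour}: a minimal edge cut separating $v_0$ from infinity, equivalently the set of primal edges crossed by a cycle of $(\mathbb{Z}^2)^\ast$ around $v_0$. A Maker move ``protects'' primal edges, i.e.\ forbids Breaker from ever using the corresponding dual edges in a contour. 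Since the game is monotone in $(p,q)$ (extra deletions only help Breaker), it suffices to treat the case $q = 2p$.

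Breaker's strategy will maintain a bounded number $N$ of pairwise edge-disjoint contours $\Gamma_1,\dots,\Gamma_N$ around $v_0$ --- ``candidate barriers'' --- on each of which he tries to delete all but a few edges. Call $\Gamma_i$ \emph{live} if none of its edges is protected, and say it is at \emph{level $\ell$} if exactly $\ell$ of its edges are not yet deleted. If at the start of a Maker turn there are two \emph{edge-disjoint} live contours each at level at most $q$, Breaker wins: Maker can protect an edge of only one of them, and next turn Breaker finishes off the other. There are two quantitative points behind the strategy. First, when Maker protects an edge $e$ of a live contour $\Gamma_i$, Breaker \emph{repairs} $\Gamma_i$ by rerouting the dual cycle around $e$ --- replacing the dual edge through $e$ by the three-edge path around the incident unit square --- which keeps every previously deleted edge on the contour and raises its level by exactly $2$; thus one protected edge forces only $2$ extra deletions to restore $\Gamma_i$ (and if Maker concentrates several protections in one stretch of $\Gamma_i$, one detour around the whole block still costs only $2$). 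Hence with $q = 2p$ Breaker has, each round, exactly enough deletions to repair all the candidates Maker has spoiled and hold every $\Gamma_i$ at a fixed low level. Second, during an initial ``quiet'' phase --- before the candidates are close enough to completion that Maker is forced to intervene --- Breaker spends those same $2p$ deletions per round bringing fresh edge-disjoint contours (for instance concentric square circuits around $v_0$, which are automatically pairwise edge-disjoint) down to the required level, accumulating $N$ simultaneous near-complete edge-disjoint barriers. Once $N \geq 2$, the double threat above becomes unavoidable for Maker.

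The main obstacle is the geometric bookkeeping behind both points: one must set up the family of contours and the repair rule so that the $\Gamma_i$ remain pairwise edge-disjoint after arbitrarily many detours, so that every detour is genuinely available (its three new dual edges unprotected and not lying on another candidate --- or, if one allows the reroute to create a non-simple closed dual walk around $v_0$, one should note that such a walk still contains a cut), and so that each protection really raises the relevant level by only $2$. I expect this to be handled by always detouring \emph{outward} into fresh territory and by spacing the concentric circuits far enough apart that a detour never reaches a neighbouring candidate, together with a turn-by-turn potential argument showing that the number of edge-disjoint live low-level contours is non-decreasing and strictly increases until it reaches $2$. A convenient way to make the accumulation phase precise may be to first prove Breaker wins the analogous game on a large enough finite box (where ``escaping'' means reaching the boundary of the box), reducing the infinite game to a finite, determined one.
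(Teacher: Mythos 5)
There is a genuine gap, in fact two. First, your ``double threat'' criterion is wrong as stated: Maker claims $p$ edges per turn, so if there are only two edge-disjoint live contours at level at most $q$ at the start of her turn, she can protect one edge on \emph{each} of them (for any $p\geq 2$), and after Breaker's forced detours both levels go back up to roughly $q+2$, which his $q=2p$ deletions cannot finish. To create an unanswerable threat you would need at least $p+1$ pairwise edge-disjoint live contours simultaneously at level at most $q$, and this feeds directly into the second, more serious problem: the progress argument. With $q=2p$ the budget is exactly balanced --- by your own accounting each Maker protection on a contour costs Breaker $2$ repair deletions --- so if Maker simply distributes her $p$ edges each turn among whichever candidates are closest to completion, Breaker's entire allowance is consumed holding the status quo, and nothing forces the number of near-complete contours to increase. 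The asserted potential function (``the number of edge-disjoint live low-level contours is non-decreasing and strictly increases'') has no mechanism behind it; there is no point at which Maker is ``forced to intervene'' in a way that frees up Breaker's budget. Moreover, the ``each protection costs only $2$, even for blocks'' invariant is itself unproven against pre-emptive play: Maker can protect edges \emph{off} the current contour, exactly where a future detour would have to run, so the amortized charge of $2$ per Maker edge over the whole game requires a careful strategy and bookkeeping that the sketch does not supply. That bookkeeping is essentially the content of the double-response crossing theorem proved in the prequel paper, which is a genuinely nontrivial result, not a local pairing/detour observation.

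For comparison, the paper's proof does not try to complete contours by repair at all. Breaker confines play to $N=4(p+2)^{8p}$ concentric annuli of width $p+1$, each split into four rectangular strips and four constant-size ($8p$-edge) corner sets. On each strip he plays the horizontal player's winning strategy in the $p$-double-response game (imported from the prequel), which \emph{prevents} Maker from ever crossing the strip; by planar duality this automatically yields Breaker a dual crossing of the strip once all its edges are claimed, with no completion argument needed. The progress issue you run into is handled separately and only on the corners: because the corners have size independent of the annulus index, Breaker can treat them as a double-response box game, and with exponentially many annuli the box-game lemma guarantees he claims \emph{all} edges of some corner set, closing a dual cycle in that annulus. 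Your plan would need either a replacement for the prequel's crossing theorem or an honest amortized-repair analysis, plus a mechanism (like the constant-size boxes) that converts a 2-for-1 response advantage into eventual completion of some barrier; as written, neither is there.
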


\begin{theorem}\label{theorem: regular trees}
Let $p,q,d\in \mathbb{N}$, and let $T_{d}$ denote the infinite $d$-regular tree.  Maker has a winning strategy for the $(p,q)$-percolation game on $T_{d}$ if and only if $p(d-2) \geqslant q$.
\end{theorem}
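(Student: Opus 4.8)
The plan is to analyse the game on $T_d$ by tracking, instead of the full component of $v_0$, a single "lifeline" path heading towards infinity, together with a potential function measuring how much Maker has invested in escape routes. Root the tree at $v_0$; every vertex at distance $n$ from the root has one parent and $d-1$ children (the root has $d$ children), so the number of edges leaving level $n$ downwards grows like $(d-1)^n$. The intuition behind the threshold $p(d-2)\ge q$ is a bookkeeping one: whenever Maker claims an edge $e$ from a vertex $u$ to a child $w$, she commits to eventually escaping through the subtree below $w$; that subtree offers $d-1$ further downward edges at $w$, of which Breaker can destroy at most $q$ per round while Maker claims $p$ more, and the "surplus" branching she needs is captured by $(d-2)$ (one child-edge is "used up" maintaining the current position, the rest must outpace Breaker).

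For the Maker side ($p(d-2)\ge q$), I would have Maker maintain at all times a finite safe path $v_0=u_0,u_1,\dots,u_k$ of claimed edges from the root, with the invariant that the endpoint $u_k$ still lies in an infinite component of the current (Breaker-damaged) graph; moreover she maintains a "reservoir" of partially-built alternative sub-paths so that whenever Breaker severs the path or isolates $u_k$'s subtree, she can splice in a replacement. Concretely, each turn Maker devotes her $p$ moves to extending the current path by one edge and to reinforcing branch points: at a vertex with $d-1$ downward options, after Breaker deletes $q$ of the available continuations she still has $d-1-q \ge d-1 - p(d-2) $ — one checks the inequality $p(d-2)\ge q$ is exactly what guarantees, amortised over the $p$ edges she plays, that she can always keep at least one live continuation ahead of Breaker. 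Formalising this as a potential/weight argument (assign weight $\alpha^{-n}$ to level-$n$ edges for a suitable $\alpha$ depending on $d,p,q$, show Maker can keep the total weight of her safe escape-tree bounded below) is the cleanest route and sidesteps explicit strategy-stealing.

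For the Breaker side ($q> p(d-2)$, equivalently $q\ge p(d-2)+1$), I would use a pairing/weight potential in Breaker's favour: assign to each edge at distance $n$ from $v_0$ a weight $w_n=\beta^{n}$ with $\beta>1$ chosen so that $q > p\cdot\frac{(d-1)\beta-1}{\beta-1}\cdot(\text{normalising factor})$ — more simply, Breaker plays a "cut the cheapest frontier" strategy, always deleting $q$ edges on the current boundary of the finite component $C$ containing $v_0$ among the safe edges, prioritising so as to drive down the number of live exits. The key computation is that the number of edges on the frontier of $v_0$'s component can be forced to shrink: Maker adds at most $p$ safe edges per round, each increasing the exit count by at most $d-2$ (a new safe vertex has $d-1$ new edges but consumes one incoming exit), so the exit count changes by at most $+p(d-2)-q<0$ per round whenever Breaker can find $q$ live frontier edges to delete — and if he ever cannot, the component is already finite and sealed. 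Telescoping this deficit to zero in finitely many rounds gives Breaker's win.

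The main obstacle I anticipate is on the Maker side: ensuring that the "reservoir" of backup sub-paths is genuinely maintainable, i.e. that Breaker cannot, by threatening several branch points simultaneously, force Maker to split her $p$ moves so thinly that every branch dies. This is where the amortised weight function must be set up carefully — one wants a single scalar potential $\Phi$ (a weighted count of Maker's safe edges in the subtree below her current frontier, discounted by depth) such that a single well-chosen Maker move per turn restores $\Phi$ no matter which $q$ edges Breaker took, using $p(d-2)\ge q$ to absorb the worst case. Getting the discount factor and the tie-breaking order exactly right, and checking the boundary case $q=p(d-2)$ where the inequality is tight, is the delicate part; the Breaker direction should be comparatively routine once the right weighting is identified.
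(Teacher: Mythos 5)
Your counting heuristic is the right one, but as written the proposal does not prove either direction. On the Maker side you explicitly leave the crucial step open: the ``reservoir''/depth-discounted potential $\alpha^{-n}$ is never specified, and it is doubtful it can give the exact threshold at all, since in the tight case $q=p(d-2)$ the relevant quantity (the number of unclaimed edges on the boundary of Maker's component) stays \emph{constant} over rounds, so any strictly decaying weighting will lose the boundary case; the real issue you need to address, and do not, is why Breaker's deletions \emph{deep inside} the subtree Maker is heading into cannot hurt her. On the Breaker side there is a concrete flaw: the per-round bound ``each Maker edge increases the exit count by at most $d-2$'' is false against a Maker who builds safe clusters away from $v_0$'s component and later joins one to it --- the single joining move adds the entire boundary of the pre-built cluster, which can be arbitrarily large. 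This can be repaired, but only by an amortised count over all Maker edges ever claimed (a cluster of $m$ edges plus its connector contributes exactly $(m+1)(d-2)$ to the boundary upon merging), or by first proving that such distant moves are dominated.

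That domination step is exactly how the paper proceeds, and it is the missing idea in your write-up: a lemma showing that in a tree, \emph{both} players may be assumed under optimal play to claim only edges adjacent to the Maker component $C_t$ containing $v_0$ (a far move by Breaker is dominated by claiming the nearest ancestor edge outside $C_t$, since claiming that ancestor is tantamount to claiming all its descendants; a far move by Maker is handled by a strategy-modification argument). Once this is in place, no potential function is needed: the number $\delta_t$ of unclaimed edges adjacent to $C_t$ evolves deterministically, $\delta_0=d$, each Maker edge gives $+(d-2)$, each Breaker edge gives $-1$, so after $N$ rounds $\delta_{N(p+q)}=d+N\left(p(d-2)-q\right)$. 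This single computation settles both directions simultaneously, including the tight case, whereas your proposal handles the two directions by separate, unfinished mechanisms.
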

	\begin{definition}\label{def: bi-regular trees}
		For integers $2\leq a\leq b$, let $T_{a,b}$ denote the infinite bi-regular tree with vertices of two types, Type I and Type II, defined as follows:  each vertex of Type I is adjacent to exactly $a$ vertices of Type II, while each vertex of Type II is adjacent to exactly $b$ vertices of Type I.  
	\end{definition}
	\begin{theorem}\label{theorem: bi-regular tree}
		Let $p,q,a,b \in \mathbb{N}$ with $2\leq a\leq b$.  Maker has a winning strategy in the $(p,q)$-percolation game on $T_{a,b}$ if and only if
		\[p(b-2) -\Bigl\lceil\frac{p}{a}\Bigr\rceil (b-a)\geq q,\]
		 irrespective of the choice of the root.
	\end{theorem}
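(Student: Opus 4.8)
The plan is to establish both directions by a potential argument in the spirit of the proof of Theorem~\ref{theorem: regular trees}, the one genuinely new ingredient being that the recursion governing the potential now alternates between two regimes, according to whether the vertex currently being ``crossed'' is of Type~I (a bottleneck, with only $a-1$ children) or of Type~II (with $b-1$ children). Write $R:=p(b-2)-\lceil p/a\rceil(b-a)$ for the claimed threshold. Two preliminary reductions: first, because $T_{a,b}$ is a locally finite tree, at any moment the component of $v_0$ is infinite if and only if it contains an infinite non-deleted ray (by König's lemma), equivalently Breaker has not yet deleted every edge of some finite edge-cut separating $v_0$ from infinity; second, since the root may be of either type one must run the whole argument in two base cases, but the root only ever has \emph{more} children than an interior vertex of the same type ($a$ versus $a-1$, or $b$ versus $b-1$), so this affects the analysis by at most an additive constant and yields the same threshold $R$ in both cases. (The bi-regular case does not reduce to the regular one, so the two-type recursion really is needed.)

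For the Breaker direction, suppose $q\geq R+1$. Breaker fixes the cut $F_0$ of edges incident to $v_0$ and maintains a cut $F_t$ by the rule: whenever Maker makes some $uv\in F_t$ safe, with $v$ the endpoint farther from $v_0$, Breaker removes $uv$ from $F_t$ and inserts the non-deleted edges from $v$ to its children. Thus $F_t$ is always a cut containing no safe edge, and the game continues only while $F_t$ contains a non-deleted edge, so it suffices to drive the number $f_t$ of non-deleted edges of $F_t$ to $0$. Crossing a Type~II vertex increases $f_t$ by $b-2$ but replaces an edge pointing into a Type~II vertex by $b-1$ edges pointing into Type~I vertices; crossing a Type~I vertex increases $f_t$ by only $a-2$ but produces $a-1$ edges pointing into Type~II vertices. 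A scheduling calculation then shows that in the ``steady state'' Maker can increase $f_t$ by at most $(p-\lceil p/a\rceil)(b-2)+\lceil p/a\rceil(a-2)=R$ per turn --- she must spend at least $\lceil p/a\rceil$ of her $p$ crossings on Type~I vertices to keep feeding the Type~II crossings --- while Breaker deletes $q\geq R+1$ edges of $F_t$ per turn. The only subtlety is that Maker could transiently do better by stockpiling edges pointing into Type~II vertices; Breaker neutralises this by always deleting such edges with priority, so that Maker's total excess over the whole game is bounded by a constant, and $f_t$ still hits $0$.

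For the Maker direction, suppose $q\leq R$. Maker maintains a finite safe subtree $S_t\ni v_0$ together with the potential $\Psi(S_t)=\sum w(e)$, summed over non-deleted edges $e$ of $T_{a,b}$ leaving $S_t$, where $w(e)$ decays geometrically in the distance of $e$ from $v_0$ with the ratio tuned so that extending $S_t$ past a Type~I vertex is exactly break-even while extending it past a Type~II vertex gives a surplus (equivalently, $\Psi$ may be defined through a mutual recursion over rooted sub-subtrees of the two types). The constants are calibrated so that Breaker's deletion of $q$ edges costs at most one unit of potential, while Maker, using her $p$ marks to grow $S_t$ past a carefully chosen batch of frontier vertices --- necessarily including at least $\lceil p/a\rceil$ Type~I bottlenecks, which must be crossed before the Type~II branching beneath them can be harvested --- can always restore that unit. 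Keeping $\Psi(S_t)\geq 1$ for all time then finishes, since $\Psi(S_t)>0$ forces the existence of a non-deleted edge leaving $S_t$, hence that the component of $v_0$ is not finite.

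The crux is the Maker direction: the weight function $w$ and the batch-extension rule must be calibrated so that the bookkeeping closes \emph{exactly} at $q=R$, which means tracking precisely the interplay of Type~I and Type~II crossings, and in particular it forces Maker to grow $S_t$ in bulk rather than along a single path --- otherwise Breaker concentrates all $q$ of his deletions among the children of the one vertex Maker is about to cross and Maker falls behind. The analogous scheduling/stockpiling point is also the only delicate step on the Breaker side; everything else is the routine bookkeeping familiar from Theorem~\ref{theorem: regular trees}.
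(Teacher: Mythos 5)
Your write-up identifies the right threshold and the right combinatorial mechanism (Type I crossings must subsidise Type II crossings), but in both directions the step that carries the actual difficulty is asserted rather than proved, and one inference is wrong as stated. On the Maker side, you leave the calibration of $w$ as "the crux", but no geometrically decaying potential is needed: the paper first proves a domination lemma showing that on a tree both players may be assumed to claim only edges adjacent to the safe component $C_t$ of $v_0$ (claiming a descendant edge is never better than claiming its nearest unclaimed ancestor), after which Maker's plain greedy strategy keeps the unweighted count of unclaimed boundary edges non-decreasing whenever $p(b-2)-\lceil p/a\rceil(b-a)\geq q$, and that already wins. Without such a reduction your concluding inference --- that $\Psi(S_t)>0$ forces an undeleted edge leaving $S_t$ and "hence that the component of $v_0$ is not finite" --- fails: Breaker may have deleted edges strictly beyond the frontier (distant traps), and your potential, which only charges deletions of edges leaving $S_t$, cannot see them. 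So either you must prove the adjacency reduction (which makes the weighting superfluous) or the potential must account for deletions anywhere in the tree; neither is done.

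On the Breaker side, the bound "Maker can increase $f_t$ by at most $R$ per turn in steady state" is valid only when the stock of frontier edges pointing to Type II vertices is empty at the start of her turn; otherwise she makes only Type II crossings and, with $q=R+1$, gains $p(b-2)-q=\lceil p/a\rceil(b-a)-1\geq 0$ that turn (for $a<b$), conceding nothing. Everything therefore hinges on showing that priority deletion of those edges really drives the stock to zero, which you assert ("total excess bounded by a constant") but do not prove, and which is where the paper's work lies. There is a genuine dichotomy: if $b\geq a+2$ then $q\geq p(a-1)$, so Breaker can delete every Type II target Maker creates and the stock is non-increasing; but if $b=a+1$ then $q-p(a-1)=-(\lceil p/a\rceil-1)$ can be negative, so Maker can grow the stock on every turn and Breaker cannot keep it at zero. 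The paper handles this with an amortized phase argument (tracking the fraction of Maker's moves spent on Type I crossings, showing each phase in which Breaker deletes only Type II-pointing edges ends after finitely many turns, and that the boundary count strictly decreases from phase to phase); the transient excess within a phase is proportional to the current boundary size, so "bounded by a constant" is not even the correct formulation. A smaller issue of the same nature: your cut-update rule does not address Maker claiming safe edges beyond the current cut, which later lets her cross several vertices in one turn without spending moves; the adjacency reduction disposes of this too, but your per-turn accounting as written does not.
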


\subsection{Background and discussion}\label{subsection: background and motivation}
The $(p,q)$-percolation game can be thought of as a variant of the celebrated \emph{Shannon switching game}, where the identity of the winner under optimal play was determined by Lehman~\cite{Lehman64}.  The Shannon switching game is played on a finite connected graph $G$ with two pre-specified vertices $u,v$. Two players, Short and Cut, play in alternating turns, with Cut playing first. Initially all edges of $G$ are marked as unsafe. In each of her turns Short selects an unsafe edge and marks it as safe, while in each of his turns Cut selects an unsafe edge and deletes it. Short wins if she managed to create a path of safe edges from $u$ to $v$, otherwise Cut wins. 

The possibility of selecting multiple edges radically changes the nature of the game, and in particular Lehman's arguments for the Shannon switching game do not appear to carry over to the $(p,q)$ setting. Mention should also be made here of the game of \emph{Gale}, or \emph{Bridg-it}, a specific class of Shannon switching games where $G$ is a rectangular grid and Short seeks to construct a path of safe edges from the left-hand side to the right-hand side. Bridg-it was popularised by Martin Gardner~\cite{Gardner58} and made into a commercially available game sold by Hasbro.  Several winning strategies for Bridg-it are known, but again they do not seem to generalise to the $(p,q)$ case.

As the names given to the two players indicate, $(p,q)$-percolation games are an instance of the more general class of Maker--Breaker games. Such games are played on a board $X$ (i.e. a set --- in our case, the edge-set of $\Lambda$), and a collection $\mathcal{W}$ of subsets of $X$ called \emph{winning sets}. Two players, Maker and Breaker, take turns to claim elements of $X$. Maker (typically) plays first, and claims $a$ elements in each of her turns, while Breaker claims $b$ elements on each of his. Maker wins if she manages to claim all the elements from some winning set $W\in \mathcal{W}$, while Breaker wins if he thwarts her by claiming at least one element from each winning set. Since the board is finite, no draws are allowed, and the main question is to determine who has a winning strategy.

Strictly speaking, in the $(p,q)$-percolation game one could argue that Breaker rather than Maker is trying to claim all elements from some winning set --- namely he seeks to claim all edges from a cut-set in $\Lambda$ that disconnects $\Lambda$ in such a way that the origin lies in a finite component.  However, there are two reasons to name the players in our percolation games as we do.  The first is that, informally speaking, we think of Maker as trying to build a `path to infinity' from the origin, and would like to view these infinite paths as her winning sets.  The second reason is that in Theorem~\ref{theorem: compactness} in Section~\ref{section: compactness}, we show that Maker has a winning strategy for the $(p,q)$-percolation game on $\Lambda$ if and only if she has winning Maker strategies for certain collections of Maker--Breaker games played on finite subsets of $\Lambda$ (in which her winning sets are paths from $v_0$ to some target set of vertices). It thus seems apt to call her Maker as we do.

Maker--Breaker games on graphs have been extensively investigated  since the foundational work of Chv\'atal and Erd{\H o}s~\cite{ChvatalErdos78} in the late 1970s. Important examples of such games include the connectivity game, the $k$-clique game and the Hamiltonicity game, where the board $X$ consists of the edges of a complete graph on $n$ vertices and the winning sets are spanning trees, $k$-cliques and Hamiltonian cycles respectively.

Chv\'atal and Erd{\H o}s proved that, for a variety of such games, if $n$ is sufficiently large, then Maker has a winning strategy in the case where $a=b=1$. In each case they then asked how large a bias $b=b(n)$ was required for the $(1,b)$ versions of these games to turn into Breaker's win and provided a surprising and influential \emph{random graph heuristic} for determining the value of these \emph{threshold biases}. Namely, according to this heuristic the threshold bias $b_{\star}$ at which Breaker has a winning strategy should lie  close to the threshold $b$ for a set of $\frac{1}{b+1}\binom{n}{2}$ edges chosen uniformly at random to fail, with high probability, to contain any winning set. 

This random graph heuristic has been widely investigated by a large number of researchers, in particular by Beck~\cite{Beck82,Beck85, Beck93, Beck94} and Bednarska and {\L}uczak~\cite{BednarskaLuczak00, BednarskaLuczak01}. Its correctness has been rigorously established for some games, such as the connectivity~\cite{GebauerSzabo09}, $k$-clique~\cite{Beck08} and Hamiltonicity~\cite{Krivelevich11} games, but it has also been shown to fail for other games such as general $H$-games~\cite{BednarskaLuczak00} (where the winning sets are copies of some fixed, finite graph $H$ containing at least three non-isolated vertices).

In a different direction, Stojakovi{\'c} and Szab{\'o}~\cite{StojakovicSzabo05} considered playing these Maker--Breaker games on random boards, by having $X$ consist of the edges of an Erd{\H o}s--R\'enyi random graph $G_{n,\theta}$. As having fewer edges cannot help Maker, the natural question in this setting is: what is the threshold $\theta_{\star}$ such that if $\theta\gg \theta_{\star}$, then with probability $1-o(1)$ Maker has a winning strategy for the $(1,1)$-game on $G_{n,\theta}$, while if $\theta\ll \theta_{\star}$, then with probability $1-o(1)$ Breaker has a winning strategy?  Stojakovi{\'c} and Szab{\'o} showed that for some games, such as the connectivity games,  $1/b_{\star}$ and $\theta_{\star}$ are of the same order, but that for others, such as the triangle game, no such relationship holds.

These subtle and intriguing connections between Maker--Breaker games and resilience phenomena  in discrete random structures (in addition to their obvious combinatorial appeal) have led to an abiding interest in Maker--Breaker games. 
Our main motivation in this paper is to investigate whether any connections similar to the Erd{\H o}s--Chv\'atal random graph heuristic exist when we move over from the world of finite graphs to the world of percolation on infinite graphs. Percolation theory is the branch of probability theory concerned, broadly speaking, with the study of random subgraphs of infinite graphs, and in particular with the almost-sure emergence of infinite connected components. Since its inception in Oxford in the late 1950s, it has blossomed into a rich and active area of mathematical research.  One of the cornerstones of the theory is the celebrated Harris--Kesten Theorem~\cite{Harris60, Kesten80} which we state below. The $\theta$-random measure $\mu_\theta$ on an infinite graph $\Lambda$ is, informally, the probability measure on subsets of $E(\Lambda)$ that includes each edge with probability $\theta$, independently of all the others. (We eschew measure-theoretic subtleties here; for a rigorous definition of $\mu_{\theta}$ via cylinder events, see Bollob\'as and Riordan~\cite[Chapter 1]{BollobasRiordan06}.) 
\begin{theorem*}[Harris--Kesten Theorem]
	Let $G_\theta$ denote a $\mu_\theta$-random subgraph of $\mathbb{Z}^2$. Then
	\begin{itemize}
		\item if $\theta \leqslant \frac{1}{2}$, then almost surely $G_\theta$ does not contain an infinite component;
		\item if $\theta >\frac{1}{2}$, then almost surely $G_\theta$ contains an infinite component.
	\end{itemize}
\end{theorem*}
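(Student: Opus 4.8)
The plan is to prove the sharper statement that bond percolation on $\mathbb{Z}^{2}$ has critical probability $\theta_{c}=\tfrac12$; both bullet points then follow at once, since the event that $G_{\theta}$ contains an infinite component is increasing in $\theta$ and the measures $\mu_{\theta}$ admit a monotone coupling across values of $\theta$. The three ingredients I would assemble are \emph{planar self-duality}, the \emph{Russo--Seymour--Welsh (RSW) box-crossing estimates}, and the \emph{sharpness of the phase transition}. For self-duality one associates to a configuration $\omega$ on $\mathbb{Z}^{2}$ its dual configuration on the shifted lattice, declaring a dual edge open precisely when the primal edge it crosses is closed; if $\omega$ is $\mu_{\theta}$-random then its dual is $\mu_{1-\theta}$-random. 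The key topological fact is that a rectangle has an open primal crossing between its two shorter sides if and only if it has no open dual crossing between its two longer sides. Applied to an $(n+1)\times n$ rectangle (whose obstructing dual crossing sits in a congruent rectangle), self-duality at $\theta=\tfrac12$ forces
\[\mu_{1/2}\bigl[\,(n+1)\times n\text{ rectangle has a left-right open crossing}\,\bigr]=\tfrac12\qquad\text{for every }n,\]
so at $\theta=\tfrac12$ this crossing probability is bounded away from $0$ and $1$, uniformly in $n$.

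Next I would invoke RSW: there is an increasing function $\phi\colon(0,1)\to(0,1)$ such that whenever the long-way crossing probability of a rectangle of some fixed aspect ratio is at least $\epsilon$, the long-way crossing probability of a rectangle of twice that aspect ratio is at least $\phi(\epsilon)$; iterating from the self-dual estimate yields, for every fixed $\rho\ge1$, a constant $c_{\rho}>0$ with $\mu_{1/2}[\,\rho n\times n\text{ rectangle crossed the long way}\,]\ge c_{\rho}$ for all $n\ge1$. Gluing four such crossings of aspect-ratio-$3$ rectangles around the square annulus $A_{k}:=[-3^{k+1},3^{k+1}]^{2}\setminus(-3^{k},3^{k})^{2}$ and using the FKG inequality, the probability that the \emph{dual} configuration contains an open circuit surrounding the origin inside $A_{k}$ is at least some absolute constant $c>0$, uniformly in $k$.

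For the subcritical side, observe that for distinct $k$ the annuli $A_{k}$ involve disjoint edge sets, so these circuit events are independent; by the second Borel--Cantelli lemma almost surely infinitely many of them occur, whence at $\theta=\tfrac12$ the origin is almost surely enclosed by a dual open circuit and its open cluster is finite. Translation invariance and a union bound over starting vertices upgrade this to: almost surely $G_{1/2}$ has no infinite component, and monotonicity in $\theta$ extends this to all $\theta\le\tfrac12$. This gives the first bullet point, together with the bound $\theta_{c}\ge\tfrac12$.

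The supercritical side --- that $\theta_{c}\le\tfrac12$, which is Kesten's half of the theorem --- is the step I expect to be the main obstacle. The cleanest route is to establish exponential decay of the cluster radius throughout the subcritical phase: with $\theta_{c}$ the supremum of parameters at which $G_{\theta}$ almost surely has no infinite component, one shows (via the Aizenman--Barsky differential inequalities, or the Menshikov / Duminil-Copin--Tassion argument) that for each $\theta<\theta_{c}$ there is $c(\theta)>0$ with $\mu_{\theta}[\,0\leftrightarrow\partial B_{n}\,]\le e^{-c(\theta)n}$, where $B_{n}=[-n,n]^{2}$. If we had $\theta_{c}>\tfrac12$, then at $\theta=\tfrac12$ both the primal configuration and, by duality, the dual configuration would have exponentially decaying cluster radii; but a long-way crossing of an $n\times\rho n$ rectangle contains an open path of diameter at least $\rho n$, so a union bound over its possible starting points gives $\mu_{1/2}[\,\rho n\times n\text{ crossed the long way}\,]\le (\rho n^{2})e^{-cn/2}\to0$, contradicting the uniform RSW lower bound $c_{\rho}>0$. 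Hence $\theta_{c}=\tfrac12$, and for every $\theta>\tfrac12$ an infinite open cluster exists with positive, and therefore (by ergodicity under lattice translations) full, probability, which is the second bullet point. An alternative to the exponential-decay input is to run Russo's formula together with a sharp-threshold/influence inequality (Margulis--Russo, or the Kahn--Kalai--Linial / Bourgain et al.\ bound) to show that the crossing probability of a large box rises from near $0$ to near $1$ within a $\theta$-window of width $O(1/\log n)$ about $\theta_{c}$; self-duality pins this window at $\tfrac12$, and crossing probabilities tending to $1$ for $\theta>\tfrac12$ feed into a standard renormalisation argument to construct the infinite cluster. Either way, passing from ``crossings are bounded below at $\tfrac12$'' to ``the threshold is no larger than $\tfrac12$'' (rather than merely somewhere above it) is the genuinely hard input, and is precisely what separates Kesten's theorem from Harris's earlier inequality $\theta_{c}\ge\tfrac12$.
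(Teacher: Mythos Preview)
The paper does not contain a proof of the Harris--Kesten Theorem. It is stated there purely as background and motivation, with citations to the original papers of Harris and Kesten; no argument is given or even sketched. There is therefore nothing in the paper to compare your proposal against.

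For what it is worth, your outline is a reasonable and essentially standard modern account of the result: self-duality gives the square-crossing probability $\tfrac12$ at $\theta=\tfrac12$, RSW boosts this to uniform crossing probabilities for long rectangles, Borel--Cantelli on disjoint annuli yields Harris's bound $\theta_c\ge\tfrac12$, and then either the Aizenman--Barsky/Menshikov/Duminil-Copin--Tassion sharpness argument or a sharp-threshold approach closes the gap to give Kesten's bound $\theta_c\le\tfrac12$. You correctly identify that this last step is the substantive one. But none of this appears in the present paper, which is about Maker--Breaker games and only invokes Harris--Kesten as a motivating analogy.
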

The motivation for the present paper is the following question, inspired by the Harris--Kesten theorem and the work of Chv\'atal and Erd{\H o}s.
\begin{question*}
Consider the $(p,q)$-percolation game on $\mathbb{Z}^2$. Does there exist a critical bias $b_{\star}> 0$ such that for any $\varepsilon>0$ and all $p$ sufficiently large,  $(b_{\star}+\varepsilon)p<q$ implies Breaker has a winning strategy, while $(b_{\star}-\varepsilon) p>q$ implies Maker has a winning strategy?
\end{question*}
\noindent Our Theorems~\ref{theorem: (2q,q)-game on integer lattice} and~\ref{theorem: (p,2p)-game on integer lattice} show that there exist constants
\begin{align*}
b^- = \sup\{b: \ \forall (p,q) \textrm{ with }bp \geq q, \textrm{Maker has a winning strategy for the $(p,q)$-percolation game on $\mathbb{Z}^2$}\},\\
b^+ = \inf\{b: \ \forall (p,q) \textrm{ with }bp \leq q, \textrm{Breaker has a winning strategy for the $(p,q)$-percolation game on $\mathbb{Z}^2$}\}
\end{align*}
with 
\[\frac{1}{2}\leq b^- \leq b^+ \leq 2.\]
Furthermore, Theorem~\ref{theorem: (p,p)-game on integer lattice} shows that $b^+\geq 1$. Based on the Harris--Kesten theorem and a random graph heuristic, it would be tempting to guess that $b^-=b^+=b_{\star}=1$ --- however, we currently have far too little evidence in favour of this guess to make it a formal conjecture.

On the other hand, we are able to establish some correspondence between percolation theory and our Maker-Breaker games in the case of infinite $d$-regular trees, also known as \emph{Bethe lattices} in the context of percolation theory. It is an easy exercise to determine that the critical probability for percolation on an infinite $d$-regular tree is $\theta_c=1/(d-1)$.  Our Theorem~\ref{theorem: regular trees}  establishes that if $(d-2)p\geq q$ then Maker has a winning strategy for the $(p,q)$-percolation game on the $d$-regular tree, while if $(d-2)p<q$ Breaker has a winning strategy. Thus, in this case we do have a critical bias $b_{\star}=d-2$, and it matches what one might expect from a random graph heuristic, that is, $\theta_{c} = 1/(b_{\star} + 1)$.

We also show the existence of a critical bias $b_{\star}=b-2-\frac{b-a}{a}=\frac{ab-a-b}{a}$ for $(p,q)$-percolation games on the bi-regular trees $T_{a,b}$. In this case our result does not quite match what one might expect from a random graph heuristic, since $1/(b_{\star}+1)=a/(b(a-1))$, whereas the critical probability for percolation is (via standard application of the theory of branching processes)  $\theta_{c}=1/\sqrt{(a-1)(b-1)}$.


Many of the modern proofs of the Harris--Kesten theorem rely on Russo--Seymour--Welsh (RSW) lemmas on the probability of having a crossing path in a $\theta$-random subset of a rectangular grid. Before considering Question~\ref{question: HK for Maker-Breaker} and the $(p,q)$-percolation, it thus makes sense to first investigate certain crossing games --- in effect, generalisations of Bridg-it. This was done by the authors in a prequel~\cite{DayFalgasRavry18a} to this work. In the present paper we apply some  results on crossing games from~\cite{DayFalgasRavry18a}  together with some other ideas to obtain Theorems~\ref{theorem: (2q,q)-game on integer lattice} and~\ref{theorem: (p,2p)-game on integer lattice}.

Finally, as pointed out to us by one of the referees,  the $(p,q)$-percolation game on $\mathbb{Z}^2$ is related to John Horton Conway's celebrated \emph{Angel and Devil game}. In this two-player game, an Angel starts at the origin in $\mathbb{Z}^2$, under the watchful gaze of her opponent, the Devil. The Angel plays first. On each of her turns, the Angel can jump to any vertex of $\mathbb{Z}^2$ at $\ell_{\infty}$-distance at most $p$ from her present location (leaping over any obstacle), provided this vertex has not yet been destroyed by the Devil; on each of his  turns the Devil destroys a vertex of $\mathbb{Z}^2$. Here $p$ is a fixed positive integer, referred to as the Angel's power. The Devil wins if the Angel is unable to move on her turn. The Angel wins if she survives indefinitely.

The Angel and Devil game was introduced by Berlekamp, Conway and Guy~\cite{BerlekampConwayGuy82} in 1982 and later popularised by Conway~\cite{Conway96}. It was shown in~\cite{BerlekampConwayGuy82} that if $p=1$ then the Devil has a winning strategy. Conway asked~\cite{Conway96} if for some sufficiently large power $p$, the Angel has a winning strategy. Conway's question (and the rewards he offered for its resolution) sparked the interest of researchers. Sufficiently powerful Angels were shown to win a version of the game on $\mathbb{Z}^3$~\cite{BollobasLeader06,Kutz05}, before the independent work of Bowditch~\cite{Bowditch07},  G\'acs~\cite{Gacs07}, Kloster~\cite{Kloster07} and Math\'e~\cite{Mathe07} showed that Angels of sufficiently large power $p$ win the Angel and Devil game on $\mathbb{Z}^2$, thereby answering Conway's question.  Indeed, Math\'e and Kloster gave Angel strategies showing that a power of $p=2$ suffices for the Angel to evade the Devil indefinitely.

Besides being played on similar boards and both having one player (Maker/the Angel) trying to `escape to infinity' while the other (Breaker/the Devil) destroys parts of the board, one of the points in common between $(p,q)$-percolation games and the Angel and Devil Game is the fact that, for many combinations of $(p,q)$, some of the  `obvious' Maker strategies seem to quickly run into trouble, with Breaker able to lay distant traps for her just as the Devil does for the Angel in Conway's game.

\subsection{Organisation of the paper}
In Section~\ref{section: compactness}, we prove a compactness result, Theorem~\ref{theorem: compactness}, which reduces a percolation game on an infinite connected graph to certain families of Maker--Breaker games on finite connected (multi)graphs. In Section~\ref{section: results on general graphs and lattices} we prove a result for general graphs as well as Theorems~\ref{theorem: (p,p)-game on integer lattice}, \ref{theorem: regular trees} and~\ref{theorem: bi-regular tree} on percolation games on integer lattices and infinite trees. In Section~\ref{section: results on the square integer lattice}, we prove our results for percolation games on $\mathbb{Z}^2$, that is, Theorems \ref{theorem: (2q,q)-game on integer lattice} and \ref{theorem: (p,2p)-game on integer lattice}. We end the paper in Section~\ref{section: concluding remarks and open questions} with some concluding remarks and many questions and open problems.

\subsection{Notation}\label{subsection: basic def and notation}

A (multi)graph is a pair $G = (V, E)$, where $V = V (G)$ is a set of vertices and $E = E(G)$ is a collection of pairs from $V$ (some of which may be included multiple times), which form the edges of $G$. Given $S\subseteq V(G)$, the subgraph of $G$ induced by $S$ is $(S, \{e\in E(G): \ e\subseteq S\})$.
In this paper, we  often omit the prefix (multi) and identify a graph with its edge-set when the underlying vertex-set is clear from context. A graph $G$ is connected if for any pair of vertices $v,v'\in V(G)$ there is a finite path of edges of $G$ from $v$ to $v'$. We study percolation games on \emph{rooted infinite connected (multi)graphs}, or \emph{RIC}-graphs, which is to say pairs $(\Lambda, v_0)$ where $\Lambda$ is an infinite, connected graph and the root $v_0$ is a vertex of $\Lambda$. We often refer to $v_0$ as the \emph{origin} of our RIC-graph. In the case where $\Lambda$ is vertex-transitive, we may in a slight abuse of notation write $\Lambda$ for the RIC-graph $(\Lambda, v_0)$ and take it that $v_0$ is arbitrarily specified.

The $(p,q)$-percolation game on the RIC-graph $(\Lambda, v_0)$ was formally defined above in Definition~\ref{definition: percolation game}. When analysing it, it is convenient to consider that both players choose the edges they will select sequentially, so e.g.\ on each of her turns Maker selects an ordered sequence of $p$ edges, $e_1, e_2, \ldots, e_p$, she marks as safe one by one (rather than a $p$-set of edges which she marks safe all at once) and similarly Breaker selects an ordered sequence of $q$ edges to destroy one by one. This clearly makes no difference to the game, and allows us to define a notion of time:  we say a game is at \textit{time} $t$ if a combined total of $t$ edges have been played.  For example, at time $t = 9$ in a $(2,2)$-percolation game, it is Maker's third turn and she has already claimed the first (but not the second) of the two edges she will mark as safe in that turn.

\section{Compactness}\label{section: compactness}
In this section we prove a compactness theorem on percolation games, which  says that they are essentially equivalent to certain generalised Shannon switching games. Let $(\Lambda, v_0)$ be an RIC-graph, and let $X$ be a finite connected induced subgraph of $\Lambda$ such that $v_0 \in V(X)$. The \emph{boundary} of $X$ in $\Lambda$ is the collection $\partial_{\Lambda} (X)$ of vertices in $X$ that send an edge to a vertex in $V(\Lambda)\setminus V(X)$. 
\begin{definition}[Escape game]
	In the \emph{$(p,q)$-escape game} on $(X, \Lambda, v_0)$, two players, Maker and Breaker, claim edges of the finite graph $X$ in alternating turns, with Maker playing first.  On each of her turns, Maker claims $p$ as-yet-unclaimed edges for herself, while Breaker on each of his turns claims $q$ such edges for himself. Maker wins the game if she manages to claim all of the edges of a path from the $v_0$ to $\partial_{\Lambda}(X)$, while Breaker  wins if he claims at least one edge from every such path.	
\end{definition}
When there is no ambiguity regarding $p,q, \Lambda, v_0$ we just refer to the escape game on $X$. Observe that the escape game cannot end in a draw, and terminates within $\Bigl\lceil \vert E(X)\vert /(p+q)\Bigr\rceil$ turns. By contracting all vertices in $\partial_{\Lambda}(X)$ to a single vertex $u_0$, we obtain a (multi)graph $G$. The escape game on $X$ is thus seen to be equivalent to a generalised Shannon switching game on $G$ where Short seeks to create a path of safe edges from $u_0$ to $v_0$ and where Short and Cut mark respectively $p$ and $q$ edges on each of their turns.

\begin{theorem}\label{theorem: compactness}
Let $\Lambda$ be an RIC-graph with origin $v_{0}$.  Maker has a winning strategy for the $(p,q)$-percolation game on $\Lambda$ if and only if Maker has winning strategies for every $(p,q)$-escape game on $(X, \Lambda, v_0)$, for all finite connected induced subgraphs $X$ of $\Lambda$ with $v_{0}\in V(X)$.
\end{theorem}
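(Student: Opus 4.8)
The plan is to prove the two directions separately, with the interesting direction being the ``if'' direction, which is where a compactness/König's-lemma-type argument enters.

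\textbf{The easy direction.} First I would show that if Maker has a winning strategy for the $(p,q)$-percolation game on $\Lambda$, then she has a winning strategy for every escape game on $(X,\Lambda,v_0)$. Suppose for contradiction that Breaker has a winning strategy $\mathcal{B}$ in the escape game on some $X$. Maker plays the percolation game by following her winning strategy there, but whenever she would play an edge outside $X$ we can have her play it ``for real'' in the percolation game while ignoring it in the simulated escape game; conversely, Breaker in the percolation game simulates $\mathcal{B}$ inside $X$ and plays arbitrary unsafe edges outside $X$. Since $\mathcal{B}$ wins the escape game, at some finite time every $v_0$--$\partial_\Lambda(X)$ path in $X$ contains a Breaker edge; deleting those edges from $\Lambda$ disconnects $v_0$ from $V(\Lambda)\setminus V(X)$ (any infinite path from $v_0$ must leave the finite set $V(X)$ and hence cross such a path), so the component of $v_0$ is contained in the finite set $V(X)$, contradicting the fact that Maker was following a winning percolation strategy. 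I would want to be slightly careful about the turn-parity/timing bookkeeping when Maker ``wastes'' moves outside $X$, but since wasting moves never hurts Breaker this causes no real difficulty.

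\textbf{The hard direction.} Now suppose Maker has a winning strategy in every escape game; I want to produce a single winning strategy for the percolation game on $\Lambda$. Fix an exhaustion $v_0 \in X_1 \subsetneq X_2 \subsetneq \cdots$ of $\Lambda$ by finite connected induced subgraphs with $\bigcup_n V(X_n) = V(\Lambda)$. The natural idea is a diagonalisation: Maker should play so that, for every $n$, she eventually secures a safe path from $v_0$ to $\partial_\Lambda(X_n)$, since a nested sequence of such safe paths yields a safe ray from $v_0$, keeping $v_0$ in an infinite component forever. The subtlety is that Maker must commit to one strategy \emph{in advance} without knowing how far the game will ``really'' go, and a strategy winning the escape game on $X_n$ need not be consistent with one winning on $X_{n+1}$. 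This is exactly where a compactness argument is needed: I would consider the tree of all finite legal play-sequences in the percolation game in which Maker has not yet lost, prune it to plays consistent with Maker winning every escape game on the $X_n$'s that have been ``engulfed'' so far, and use König's Lemma (the tree is finitely branching at each level since $p,q$ are finite, once we restrict to a bounded region — here one must first argue that Maker can be assumed to play only inside the engulfed region, or otherwise bound the branching) to extract an infinite branch; the strategy along that branch is the desired winning strategy. The main obstacle, and the step I expect to need the most care, is setting up the tree so that it is genuinely finitely branching \emph{and} every infinite branch corresponds to a genuinely winning play: one needs that a Maker strategy which, for each $n$, restricted to the first $\lceil|E(X_n)|/(p+q)\rceil$ or so turns wins the escape game on $X_n$, actually exists as a limit, and that concatenating the resulting safe paths gives an infinite component. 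Concretely I would phrase it as: define $\sigma_n$ to be a winning Maker strategy for the escape game on $X_n$; these live on finite game trees; use compactness (equivalently, a pigeonhole/subsequence argument on initial segments, since each $X_n$-escape game is finite) to find a strategy $\sigma$ that agrees with infinitely many $\sigma_n$ on each fixed finite initial segment of play; then $\sigma$ wins every escape game on $X_n$ and hence, by the path-nesting observation above, wins the percolation game. I would also remark that because Breaker only ever deletes finitely many edges in finite time, a safe path inside $X_n$ from $v_0$ to $\partial_\Lambda(X_n)$ can be extended, once $X_{n+1}$ is engulfed, to a safe path reaching $\partial_\Lambda(X_{n+1})$, making the monotone structure that drives the limit explicit.
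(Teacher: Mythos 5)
Your ``easy'' direction is essentially the paper's argument (Breaker simulates his escape-game win inside $X$, treating Maker's wasted moves as extra Maker edges in $X$), and it is fine. The problem is the ``hard'' direction, where the compactness step is gestured at but not actually carried out, and the two mechanisms you propose both have genuine gaps. First, the K\H{o}nig's-lemma version: the tree of finite play-sequences of the percolation game is \emph{not} finitely branching, and the culprit is Breaker, not Maker --- Breaker may delete any $q$ edges anywhere in the infinite graph $\Lambda$, and his distant moves cannot be ignored (they can contribute to a later cut), so restricting Maker to the ``engulfed region'' does not bound the branching. Moreover, an infinite branch of a play tree is a single play, not a strategy: a Maker strategy must prescribe answers to \emph{all} Breaker continuations, so ``the strategy along that branch'' is not an object your argument produces. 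Second, the ``limit of the $\sigma_n$'' version: the $\sigma_n$ are defined on different finite boards $X_n$ and only against Breaker moves inside $X_n$, whereas in the percolation game both players may play outside $X_n$; without a translation of percolation positions into escape positions (the paper deletes Breaker edges and contracts Maker edges to form updated boards $X_i'$), ``$\sigma$ agrees with $\sigma_n$ on an initial segment'' is not well defined, and the pigeonhole over a fixed finite history also has to contend with Maker's move ranging over the infinitely many edges of $\Lambda$.

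The missing idea is the lemma that makes the diagonalisation work: if Maker's actual move, restricted to $X_{i_0}$, coincides with the restriction to $X_{i_0}$ of a winning move for the escape game on some larger $X_{i_1}$, then Maker remains in a \emph{winning position} on $X_{i_0}$ (proved in the paper by transferring a hypothetical Breaker blocking strategy on $X_{i_0}$ into the $X_{i_1}$-game, using exactly the path-nesting fact you mention). The paper then builds Maker's move edge-by-edge (the sets $A_j$ and $S_{i,j}$) so that for every $i_0$ such an $i_1$ exists, and maintains the invariant ``Maker is in a winning position in the escape game on every $X_i$ at the start of each of her turns''; this invariant alone already wins the percolation game, since a finite component for $v_0$ would mean Breaker has blocked every $v_0$--$\partial_\Lambda(X_i)$ path in some $X_i$, an already-lost position. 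Your plan instead demands something strictly stronger --- that one strategy \emph{actually completes} a safe path to $\partial_\Lambda(X_n)$ for every $n$ --- and you do not justify that this is achievable; in particular the closing remark that a safe path to $\partial_\Lambda(X_n)$ ``can be extended'' to $\partial_\Lambda(X_{n+1})$ once $X_{n+1}$ is engulfed is unjustified (Breaker may have destroyed all continuations of that particular path), and it is precisely the incompatibility of wins at different scales that the invariant-plus-transfer-lemma machinery is there to overcome.
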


\begin{proof}
One direction of the statement is straightforward. Suppose there exists some finite connected induced subgraph $X$ of $\Lambda$ with $v_{0}\in V(X)$ such that Breaker has a winning strategy for the $(p,q)$-escape game on $X$.  Then the following is a winning Breaker strategy for the $(p,q)$-percolation game on $\Lambda$: ignore the rest of the board and focus exclusively on $X$. Whenever Maker, on one of her turns, claims $p'\leq p$ edges of $X$, Breaker selects $p-p'$ as-yet-unclaimed edges of $X$ arbitrarily and pretends that Maker claimed these edges too.  Breaker then responds using the winning strategy for the $(p,q)$-escape game on $X$. Within at most $\Bigl\lceil \vert E(X)\vert /(p+q)\Bigr\rceil$ turns, Breaker will have claimed (and destroyed) at least one edge from every path from $v_0$ to $\partial_{\Lambda}(X)$. In particular by this time, $v_0$ is contained in a finite component which is a strict subset of $V(X)$ and Breaker has won the percolation game on $\Lambda$.

In the other direction, suppose that Maker has a winning strategy for the $(p,q)$-escape game on $X$ for all finite connected induced subgraphs $X$ of $\Lambda$ with $v_{0}\in V(X)$. Let $(X_{i})_{i \geqslant 0}$ be an arbitrary sequence of finite connected induced subgraphs of $\Lambda$ such that $v_0 \in V(X_0)$, $X_i \subseteq X_{i+1}$ for all $i\geq 0$ and $\bigcup_{i\geq 0}X_i= \Lambda$. (For example, one could take each $X_{i}$ to be the subgraph induced by the vertices of $\Lambda$ that lie at graph distance at most $i$ from $v_{0}$ in $\Lambda$.)

By our assumption, at the beginning of Maker's first turn each of the boards $X_i$ is in a winning position for her in the $(p,q)$-escape game on that $X_i$. We shall construct a Maker strategy that ensures this remains true for all subsequent turns, i.e. that at the beginning of her every turn Maker has a winning strategy for the $(p,q)$-escape game on $(X'_i, \Lambda', v_0)$ for every $i\geq 0$, where $X'_i$ and $\Lambda'$ are the (multi)graphs obtained from $X_i$ and $\Lambda$ by deleting all edges claimed by Breaker and contracting all edges claimed by Maker up to that point.

We first show that if we are able to achieve the above, then this is a winning strategy for Maker in the $(p,q)$-percolation game on $\Lambda$. Suppose this is not the case, and that at some time $T$ in the game the component $C$ containing $v_0$ becomes finite.  Let $D$ be the set of vertices in $V(\Lambda)\setminus C$ that are adjacent (by any edge of $\Lambda$ regardless of who may have claimed it) to at least one vertex in $C$.  Then every edge between $C$ and $D$ must have been claimed by Breaker.  Moreover, as Breaker has certainly claimed no more than $T$ edges we have that $D$ is finite.  Thus there exists some $i$ such that $C \cup D \subseteq X_{i}$.  We must have that $C \cap \partial(X_{i}) = \emptyset$, as otherwise there would be some vertex in $D \cap (V(\Lambda)\setminus X_{i})$, which is not possible.  Therefore every path from $v_{0}$ to $\partial(X_{i})$ meets at least one Breaker edge, which contradicts the fact that $X_{i}$ was in a winning position for Maker in the escape game on $X_i$.

We now specify our strategy. Assume that at the beginning of her turn, Maker is in a winning position in the $(p,q)$-escape game on $X_i$ for every $i\geq 0$ (taking into account the edges of $X_i$ claimed by Maker and Breaker during past turns). Then for each $i\geq 0$ there is a winning Maker strategy for the $(p,q)$-game on $X_i$ from the current position specifying some set $P_{i}$ of $p$ as-yet-unclaimed edges of $X_i$ as Maker's next move in the escape game.  We use the sequence of sets $(P_{i})_{i\geqslant 0}$ to produce a set $A$ of $\vert A \vert  \leqslant p$ edges, using the following algorithm.  Set $A_{0} = \emptyset$.  Given a set of edges $A_{j}$, let
\begin{equation}
S_{i,j}  = \begin{cases} P_{i} \setminus A_{j}  \text{ if } A_{j} \subseteq P_{i}, \\  \nonumber 
\emptyset \hspace{0.2cm} \text{ if } A_{j} \not\subseteq P_{i}. \end{cases} \nonumber
\end{equation}
If any edge appears in infinitely many members of the sequence $(S_{i,j})_{i \geqslant 0}$, pick\footnote{If one assumes $V(\Lambda)=\mathbb{N}$, one can e.g. pick the least such edge in the lexicographic order and thereby avoid using the axiom of choice.} one such edge $e$ and set $A_{j+1} = A_{j} \cup \{e\}$.  Otherwise, set $A = A_{j}$ and terminate the process.  As $\vert A_j\vert =j$ and $\vert S_{i,j}\vert\leq p-j$, the algorithm will terminate after at most $p$ iterations, outputting a set $A$ of at most $p$ edges. Maker's move for the percolation game is then to claim all edges in $A$ together with a set $A'$ of $p-\vert A\vert $ arbitrary edges. Breaker then answers by claiming some set $B$  of $q$ edges. We claim that irrespective of the choices of $A'$ and $B$, Maker is still in a winning position in the $(p,q)$-escape game on $X_i$ for every $i\geq 0$ at the beginning of her next turn.

Indeed, suppose for contradiction that Maker is in a losing position on $X_{i_0}$, for some $i_0 \in \mathbb{N}$, i.e. that Breaker has a winning strategy for the $(p,q)$-escape game in that position with Maker playing first.  By construction of $A$, there exists some $i_{1} > i_{0}$ such that $P_{i_1}\cap X_{i_0}=A \cap X_{i_0}$.  Now, as Maker is in a losing position on $X_{i_0}$, we have that Breaker has some strategy that can guarantee that Maker never builds a path from $v_{0}$ to $\partial(X_{i_0})$.  Thus, when playing the $(p,q)$-escape game on $X_{i_{1}}$, Breaker can use this same strategy to ensure that Maker never builds a path from $v_{0}$ to $\partial(X_{i_1})$.  However, this contradicts the fact that $P_{i_{1}}$ was a winning Maker move for the $(p,q)$-escape game on $X_{i_1}$.  Therefore no such $X_{i_{0}}$ can exist, and so every $X_{i}$ is in a winning position for Maker in the corresponding $(p,q)$-escape game.

\end{proof}
\begin{remark}\label{remark: (1,1) percolation game is solved}
Lehman~\cite{Lehman64} showed that Short has a winning strategy in the Shannon switching game on a multigraph $G$ with distinguished vertices $u,v$ if and only there is an induced subgraph $H$ of $G$ containing two edge-disjoint spanning trees connecting $u$ to $v$. Together with Theorem~\ref{theorem: compactness}, this in principle gives a criterion for determining which of Maker and Breaker has a winning strategy in the $(1,1)$-percolation game on $\Lambda$ for any RIC-graph $\Lambda$: for each $i\in \mathbb{N}$ consider the subgraph $X_i$ of $\Lambda$ induced by vertices at graph distance at most $i$ from the root $v_0$, contract the boundary $\partial_{\Lambda}(X_i)$ to a single point $u_0$ to obtain a graph $G_i$ and then use Lehman's result to determine which of Short and Cut has a winning strategy for the Shannon switching game played on $G_i$ with distinguished vertices $u_0$ and $v_0$. 
\end{remark}

\section{General graphs, integer lattices and regular trees}\label{section: results on general graphs and lattices}
\subsection{Path colourability and a proof of Theorem~\ref{theorem: (p,p)-game on integer lattice}}
We say that a $k$-colouring of the edges of an RIC-graph $\Lambda$ is a $k$\textit{-path-colouring} if, for every vertex $v$ of $\Lambda$ and every colour $i$, there exists an infinite path through $v$ consisting only of edges of $\Lambda$ in colour $i$.  If such a colouring exists, we say that $\Lambda$ is $k$\textit{-path-colourable}; $k$-path colourability can be viewed as a generalisation of the necessary and sufficient condition for Short to have a winning strategy in the Shannon switching game, see~\cite{Mansfield01}.

\begin{theorem}\label{theorem: coloured lattices}
Let $\Lambda$ be a $(k+1)$-path-colourable RIC-graph. Then Maker has a winning strategy for the $(p,p)$-percolation game on $\Lambda$ for every $p\leq k$ (irrespective of the choice of the root). \\
What is more, Maker can forgo her first turn and in addition ensure that every vertex in $\Lambda$ remains in an infinite component at all times in the game.
\end{theorem}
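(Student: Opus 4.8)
The statement to prove is that a $(k+1)$-path-colourable RIC-graph $\Lambda$ is a Maker win in the $(p,p)$-percolation game for every $p \le k$, and moreover Maker can skip her first move and keep *every* vertex in an infinite component forever. The natural approach is a pairing-type strategy along the colour classes. Fix a $(k+1)$-path-colouring $c$ of $E(\Lambda)$ with colours $\{0,1,\dots,k\}$. Since $p \le k$, we have $k+1 \ge p+1$ colour classes available. The key observation is that in each colour class $i$, every vertex lies on a bi-infinite monochromatic path $P_i(v)$; these paths give Maker a reservoir of ``escape routes''. The idea is that whenever Breaker destroys an edge, he damages at most one monochromatic path through any given vertex (since the colouring is proper in the sense that an edge has a single colour), so Maker, using $p$ moves against Breaker's $p$ moves, can always ``repair'' the threatened route — and with $p+1 \le k+1$ colours she has one more route than she strictly needs, which is what lets her forgo the first move.

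**Key steps.** First, I would set up the invariant Maker maintains: after each of her turns, for every vertex $v$ and every colour $i$, there is an infinite path from $v$ all of whose edges are either (a) still unclaimed and of colour $i$, or (b) already claimed by Maker. Call such a path an $i$-\emph{lifeline} of $v$. Initially every vertex has a $i$-lifeline in every colour, namely (half of) $P_i(v)$. Second, I would describe Maker's response to a Breaker move: Breaker claims $p$ edges $f_1, \dots, f_p$; each $f_j$ has some colour $c(f_j)$, so it can only destroy $c(f_j)$-lifelines. The point is that the damage is ``colour-local'', and for each colour $i$ that got hit, Maker wants to reconnect the severed lifelines. Because Maker has $p$ moves and there are at most $p$ distinct colours among the $f_j$, a counting/assignment argument should let her allocate one repair move per destroyed colour — but this is exactly where the real work lies (see below). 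Third, I would verify that maintaining the invariant implies Maker wins: if $v_0$ (indeed any $v$) had a $0$-lifeline at the start of Breaker's last completed round, then at the end of the game $v_0$ is joined by Maker-edges and unclaimed edges to an infinite set, and since unclaimed edges are never destroyed retroactively, $v_0$ is in an infinite component — so Breaker can never have disconnected it. The ``forgo the first turn'' clause follows because with $k+1 \ge p+1$ colours, losing one move per round still leaves enough slack: Maker is effectively playing a $(p,p)$ game but only needs to protect lifelines in $p$ of the $k+1 \ge p+1$ colours at a time, since Breaker's $p$ edges touch at most $p$ colours and at least one colour is always untouched.

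**The main obstacle.** The delicate point is the repair step: when Breaker destroys an edge $f$ of colour $i$ that lies on $v$'s current $i$-lifeline, the lifeline splits into a finite stub near $v$ and an infinite tail; Maker must reconnect the stub to *some* infinite monochromatic-or-Maker structure using a single move, and she must do this simultaneously and consistently for *all* vertices $v$ whose $i$-lifeline used $f$ — potentially infinitely many vertices, and across up to $p$ colours at once. The way I expect this to work is to not track lifelines vertex-by-vertex but to maintain, for each colour $i$, a spanning subgraph $H_i$ of the colour-$i$-plus-Maker edges in which every vertex has an infinite ray; destroying one edge of $H_i$ breaks it into at most one finite piece plus infinite pieces (using that $P_i$-structure is a union of bi-infinite paths, hence essentially a $2$-regular subgraph, so removing an edge leaves at most one bounded component), and a single Maker edge reattaching that finite piece to $P_i$ suffices — crucially, Maker can choose to play an edge \emph{of colour $i$} adjacent to the stub, reconnecting to the colour-$i$ path system, so one move fixes colour $i$ entirely. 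Summing over the at most $p$ colours Breaker hit gives at most $p$ Maker moves, which is exactly her budget; the spare colour gives the slack for the skipped first move. Making the ``at most one finite piece'' claim precise, and handling the case where Breaker repeatedly attacks the same stub, is where I would spend the bulk of the argument.
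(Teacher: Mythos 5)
Your high-level plan is the same as the paper's: maintain, after each round, the invariant that the graph obtained by deleting Breaker's edges and contracting Maker's edges is still $(k+1)$-path-colourable, note that this invariant keeps every vertex in an infinite component, and observe that since the strategy is purely responsive Maker can forgo her first turn, with the fact that $p\leq k$ leaving one colour of slack per round. However, your central repair step is wrong as stated. Suppose Breaker deletes an edge $e$ of colour $i$ and this severs a finite piece $D$ from the colour-$i$-plus-Maker subgraph $H_i$. By the definition of a component, $e$ was the \emph{only} edge of $H_i$ joining $D$ to the rest of that component: every surviving edge leaving $D$ has a colour different from $i$ (and is unclaimed or Breaker's). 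Hence ``play an edge of colour $i$ adjacent to the stub'' cannot reconnect anything --- any unclaimed colour-$i$ edge incident to $D$ has both endpoints in $D$. This is exactly the point of the paper's Lemma~\ref{lemma: good (1,1) response in (k+1)path colouring}: the repair edge $f$ must have a \emph{different} colour $j\neq c(e)$; its existence comes from taking a vertex $v\in D$ and following the infinite colour-$j$ path through $v$, which must exit the finite set $D$, and claiming (contracting) that exit edge reattaches $D$ for colour $i$ while leaving all other colours intact. In the paper $j$ is chosen to be a colour avoided by \emph{all} of Breaker's $p$ edges that round (possible since $p\leq k$), which also guarantees the repair edges are never edges Breaker has just claimed --- a detail your scheme does not address.

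Two smaller inaccuracies compound this. First, your accounting of ``one repair move per destroyed colour'' is off: Breaker may destroy $p$ edges of the same colour in distant locations, creating up to $p$ separate finite stubs, each needing its own reconnection; the correct count is one repair per destroyed edge, which still fits Maker's budget of $p$ moves, so the budget is fine but not for the reason you give (in particular, the spare colour is not what pays for the skipped first turn --- the reactive nature of the strategy is --- rather it supplies a legal repair colour). Second, a colour class of a path-colouring need not be a union of bi-infinite paths or ``essentially $2$-regular''; the claim you actually need, that deleting one edge from a graph all of whose components are infinite leaves at most one finite component, is true but follows simply from the fact that the split component has at most two pieces and cannot have both finite. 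With the repair colour corrected along the lines above, your argument becomes the paper's proof; as written, the key step fails.
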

\begin{proof}
Let $c$ be a $(k+1)$-path-colouring of $\Lambda$ with colours from $[k+1]=\{1, 2,\ldots k+1\}$.  Given finite, pairwise-disjoint edge-sets $X,Y\subseteq \Lambda$, let $(\Lambda-X)/Y$ denote the graph obtained from $\Lambda$ by first deleting every edge in $X$ and then contracting every edge in $Y$. 
\begin{lemma}\label{lemma: good (1,1) response in (k+1)path colouring}
	Let $c$ be a $(k+1)$-path-colouring of an RIC-graph $\Lambda'$ with colours from $[k+1]$. Then for every edge $e\in \Lambda'$ and every colour $j\in [k+1]\setminus \{c(e)\}$, there exists an edge $f\in \Lambda'$ with $c(f)=j$ such that $(\Lambda'-\{e\})/\{f\}$ is $(k+1)$-path colourable. 
\end{lemma}
\begin{proof}
Let $\Gamma=\{e'\in E(\Lambda):\ c(e')=c(e)\}$. 
 As  $c$ is a $(k+1)$-path-colouring of $\Lambda$, every component of $\Gamma$ is infinite. In particular,  $\Gamma-\{e\}$ contains at most one finite component.  If $\Gamma-\{e\}$ contains no finite component, then let $f$ be an arbitrary edge of $\Lambda$ with $c(f)=j$.  Otherwise, $\Gamma-\{e\}$ has a unique finite component $D$.  Let $v$ be any vertex in $D$.  As $c$ is a $(k+1)$-path-colouring of $\Lambda$, there is an infinite path $P$ in $\Lambda$ through $v$ all of whose edges are assigned colour $j\neq c(e)$ by $c$.  As $P$ is infinite and $D$ is finite, there exists an edge $f = \{v_{1},v_{2}\} \in P$ such that $v_{1} \in D$, $v_{2} \notin D$.  In particular there exists a finite path $P_1$ from $v$ to $v_1$ and an infinite path $P_2$ through $v_2$ such that both $P_1$ and $P_2$ are monochromatic  with colour $c(e)$ (with respect to the colouring $c$).  Thus, when we delete $e$ from our $(k+1)$-coloured graph $\Lambda$ and contract the edge $f$ to obtain $(\Lambda-\{e\})/\{f\}$, we have that every vertex $v\in D$ is part of an infinite component in colour $c(e)$ --- and hence the same holds for every vertex $v\in V(\Lambda)$. This immediately implies that  our $(k+1)$-coloured graph $(\Lambda-\{e\})/\{f\}$ is $(k+1)$-path-colourable (the property that every vertex is part of an infinite monochromatic component in colour $i$ for every $i\in [k+1]\setminus\{c(e)\}$ is inherited from $\Lambda$).
\end{proof}
For each $s \in \mathbb{N}$, let $M_{s}$ (respectively $B_s$) be the set of edges claimed by Maker (respectively Breaker) on her (respectively his) $s$-th turn of the game.  Set $\Lambda^{0} = \Lambda$, and for each $s \in \mathbb{N}$ let $\Lambda^{s} = (\Lambda^{s-1}-B_{s})/M_{s+1}$.  We construct below a strategy for Maker that ensures $\Lambda^{s}$ is $(k+1)$-path-colourable for every $s\geq 0$.  This is a winning strategy in the strongest sense possible, as it guarantees that every vertex $v\in V(\Lambda)$ (not just $v_0$) remains in an infinite component at all times throughout the game.

By assumption, $\Lambda^{0}$ is $(k+1)$-path-colourable. On her first turn, Maker claims $p$ arbitrary edges, or forgoes her turn entirely (this makes no difference to our analysis). On subsequent turns, she responds to Breaker's moves as follows.  Suppose $\Lambda^{s-1}$ has a $(k+1)$-path-colouring $c$ with colours from $[k+1]$, and that in his $s$-th turn Breaker claims the edges $B_{s} = \{b_{1},\ldots,b_{p}\}$.  As $\vert B_{s}\vert  = p  \leqslant k$, there is some colour $j\in [k+1]$ which does not appear in $\{c(b_1), c(b_2), \ldots , c(b_p)\}$. 
Set $c_0=c$, $\Lambda^{s-1}_{0} = \Lambda^{s-1}$, and define sequences of $(k+1)$-path-colourings $c_i$ and graphs $\Lambda^{s-1}_{i}$ as follows.  For $1\leq i\leq p$, assume $c_{i-1}$ is a $(k+1)$-path-colouring of the graph $\Lambda^{s-1}_{i-1}$ with colours from $[k+1]$.  By Lemma~\ref{lemma: good (1,1) response in (k+1)path colouring}, given an edge $b_i$ with $c_{i-1}(b_i)\neq j$  there exists an edge $f_i$ in colour $c_{i-1}(f_i)=j$ such that the graph $\Lambda^{s}_{i}=\left(\Lambda^s_{i-1} - \{b_i\}\right)/\{f_i\}$ is $(k+1)$-path-colourable by the colouring $c_i$ it inherits from $\Lambda^s_{i-1}$. It follows that the graph $\Lambda^{s-1}_p= \left(\Lambda^{s-1}-B_{s}\right)/\{f_1, f_2, \ldots, f_p\}$ is $(k+1)$-path-colourable. Setting $M_{s+1}=\{f_1, f_2, \ldots, f_p\}$ to be the set of edges claimed by Maker on her $(s+1)$-th turn, we have that $\Lambda^{s}$ is $(k+1)$-path-colourable. By induction on $s$, this gives our desired winning strategy for Maker.
\end{proof}
\noindent Theorem~\ref{theorem: (p,p)-game on integer lattice} now follows as an easy corollary:
\begin{proof}[Proof of Theorem~\ref{theorem: (p,p)-game on integer lattice}]
Label the axes of $\mathbb{Z}^{d}$ as $x_{1}, x_2, \ldots, x_{d}$.  For each $i \in [d]$, colour all edges parallel to the $x_{i}$-axis with colour $i$.  This is a $d$-path-colouring of $\mathbb{Z}^{d}$. By Theorem \ref{theorem: coloured lattices}, it follows that Maker has a winning strategy for the $(p,p)$-percolation game on $\mathbb{Z}^{d}$ for all integers $p$: $1\leq p < d$.
\end{proof}

\subsection{Infinite trees}

We begin this section by proving a general lemma on percolation games played on trees.  Let $T$ be any RIC-tree with root $v_{0}$.  Given two edges $e_{1}, e_{2}$  in $T$, we say that $e_{1}$ is an \textit{ancestor} of $e_{2}$ if the unique path between $e_{2}$ and $v_{0}$ contains the edge $e_{1}$.  Conversely if $e_{1}$ is an ancestor of $e_{2}$, then we say that $e_{2}$ is a \textit{descendant} of $e_{1}$.  Note that every edge is both an ancestor and a descendant of itself.

	Let $C_{t}$ be the component of Maker edges that, at time $t$, includes $v_{0}$.  Throughout this subsection, whenever we refer to $C_{t}$ we will always take it to mean the component containing $v_{0}$ at the relevant point in time, and forgo writing ``at time $t$".
\begin{lemma}\label{lemma: tree percolation}
We may assume that under optimal play in the  $(p,q)$-percolation game on $T$, Maker and Breaker only claim edges adjacent to $C_{t}$.
\end{lemma}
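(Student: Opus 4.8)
The plan is to establish the statement separately for Maker and for Breaker, in each case by showing that any move of that player onto an edge not adjacent to the current component $C_t$ may be replaced by a move onto a suitable edge adjacent to $C_t$ without putting that player in a worse position; applying this substitution to every move of a fixed optimal strategy then yields an optimal strategy that only ever claims edges adjacent to $C_t$. The structural input that makes such substitutions available is the following: rooting $T$ at $v_0$, the set $C_t$ is at every time a finite subtree containing $v_0$, the edges adjacent to $C_t$ are precisely the (already claimed) edges of $C_t$ together with the \emph{frontier edges}, i.e.\ the unclaimed edges with exactly one endpoint in $V(C_t)$, and --- because $T$ is a tree --- any unclaimed edge $e$ that is not adjacent to $C_t$ is a proper descendant of a \emph{unique} frontier edge $e^{\ast}=e^{\ast}(e,C_t)$, namely the edge through which the path from $e$ to $v_0$ first leaves $V(C_t)$. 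I would record as a preliminary observation that a frontier edge is never a Maker edge, since any Maker edge incident to $V(C_t)$ already lies in $C_t$.

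For Breaker, suppose his strategy calls for deleting a non-adjacent edge $e$, and set $e^{\ast}=e^{\ast}(e,C_t)$. If $e^{\ast}$ is still present, he deletes $e^{\ast}$ instead: in a tree this severs from $v_0$ the whole subtree hanging below $e^{\ast}$, which strictly contains the subtree below $e$, so the new position is at least as favourable to Breaker as the old one --- all subsequent play in the severed region is irrelevant to the component of $v_0$, and otherwise extra deletions never hurt Breaker. If $e^{\ast}$ has already been deleted, then $e$ lies in a subtree already disconnected from $v_0$, so deleting it achieves nothing, and Breaker may instead delete an arbitrary frontier edge; such an edge exists, for if $C_t$ had no frontier edge then $v_0$'s component would already equal the finite set $V(C_t)$ and Breaker would have won. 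In every case the substitute is a move adjacent to $C_t$ that is at least as good.

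For Maker, suppose her strategy calls for claiming a non-adjacent edge $e$, and again set $e^{\ast}=e^{\ast}(e,C_t)$. If $e^{\ast}$ is still present, she claims $e^{\ast}$ instead: this enlarges $C_t$ by the far endpoint of $e^{\ast}$ and protects $e^{\ast}$ from future deletion. Since $T$ is a tree, every path from $v_0$ through $e$ uses $e^{\ast}$, and $e$ itself is still in the graph, so claiming $e^{\ast}$ should dominate claiming the isolated edge $e$ --- morally, Maker does best to grow her escape route outwards from $C_t$, one protected edge at a time, rather than to protect an edge that is useless to her until the route reaching it has been built. Because edges are claimed one at a time with $C_t$ updated after each, iterating this substitution greedily within a single turn replaces any ``sprayed'' set of $p$ edges by the first $p$ edges of a protected path growing out of $C_t$, every one of which is adjacent to the component at the moment it is claimed. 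If instead $e^{\ast}$ has already been deleted, then in the tree $e$ is permanently cut off from $v_0$ and cannot help keep $v_0$ in an infinite component, so Maker may claim an arbitrary frontier edge.

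The hard part will be making ``at least as favourable'' rigorous, since after a substitution the actual component $C_t$ differs from the one arising in the play of the original strategy, so the two positions are not literally the same and one cannot simply copy moves across. I plan to handle this with a shadow-strategy argument: play the modified strategy against an arbitrary opponent while simulating the original strategy against that opponent's moves, maintaining as an invariant that at every time the component of $v_0$ in the real play contains --- in the Maker direction --- or is contained in --- in the Breaker direction --- the component of $v_0$ in the simulated play, with any opponent move that is illegal or has become irrelevant in the other game answered arbitrarily. Checking that each frontier substitution preserves this invariant, and that the invariant forces the real play to be won whenever the simulated play is, is the technical core of the proof; the rest is bookkeeping.
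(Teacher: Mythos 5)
Your overall plan --- replace any move on an edge $e$ not adjacent to $C_t$ by a move on the unique frontier/ancestor edge $e^{\ast}$ through which the path from $e$ to $v_0$ leaves $C_t$, and justify this by an exchange argument --- is exactly the route the paper takes, and your Breaker half (including the case where $e^{\ast}$ is already deleted, which the paper treats only implicitly) is sound. The gap is at the point you yourself flag as the technical core, and the one concrete prescription you give there is the wrong one. You say that an opponent move which is illegal in the other copy of the game is ``answered arbitrarily''. But the only move that can become illegal is a later claim of one of the two swapped edges, and answering it arbitrarily destroys your invariant: in the Maker case, if the real Breaker later deletes $e$ (legal in the real game, illegal in the simulation where Maker owns $e$) and you feed the simulation an arbitrary edge $g$, then the real deleted set contains $e$ while the simulated one contains $g$ instead, and any vertex below $e$ still attached to $v_0$ lies in the simulated component but not in the real one --- so ``real component $\supseteq$ simulated component'' fails, and you can no longer transfer the simulated win to the real play.

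The correct answer, which is precisely where the paper does its work, is to translate a claim of a swapped edge into a claim of its partner: if the real Breaker deletes $e$, pretend in the simulation that he deleted $e^{\ast}$ (the invariant survives exactly because $e^{\ast}$ is an ancestor of $e$, so deleting it severs a superset); if the simulated strategy later tells Maker to claim $e^{\ast}$, claim $e$ in the real game, after which the two plays coincide; and symmetrically in the Breaker half. With this translation the two plays differ at every time by a single transposition of $e$ and $e^{\ast}$, the unclaimed sets coincide, and no general ``shadow-strategy'' machinery or additional bookkeeping is needed. Relatedly, your assertion that claiming $e^{\ast}$ ``should dominate'' claiming $e$ for Maker is not a separate moral principle you can lean on --- it is established only by this transposition argument, so as written that step of your proposal is still unproved.
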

\begin{proof}
We begin by showing this holds for Breaker. Suppose Breaker's strategy requires him to claim an edge $e$ that is not adjacent to $C_{t}$.  Let $e'$ be the ancestor of $e$ that is closest to $v_{0}$ but is not in $C_{t}$; this edge $e'$ has not yet been claimed by Maker as it does not lie in $C_t$. Furthermore, as far as the outcome of the percolation game is concerned,  Breaker claiming $e'$ is equivalent to Breaker simultaneously claiming both $e'$ and all its descendants (a set of edges that includes $e$). Thus claiming $e'$ is at least as good for Breaker as claiming $e$, and we may thus assume without loss of generality that Breaker does claim $e'$ rather than $e$ when following an optimal strategy.

We next show that we may assume that at time $t$ Maker only claims edges adjacent to $C_{t}$.  Suppose that $S$ is a winning strategy for Maker in the $(p,q)$-percolation game on $T$, and at some point in the game $S$ requires Maker to claim an edge $e$ that is not adjacent to $C_{t}$.  Once again, let $e'$ be the ancestor of $e$ that is closest to $v_{0}$ but is not in $C_{t}$.  If $e'$ has already been claimed by Breaker, then as far as the outcome of the game is concerned this is equivalent to Breaker having claimed every descendant of $e'$. In particular Maker playing $e$ has no bearing on the outcome of the game, and we may thus assume without loss of generality  that Maker claims some other edge adjacent to $C_t$ rather than $e$ when following a winning strategy.

On the other hand, suppose that Breaker has not claimed $e'$. Then we construct a new Maker strategy $S'$, in which Maker claims $e'$ instead of $e$ at time $t$, but then follows what the strategy $S$ would dictate if she had claimed $e$ (i.e. she plays $e'$ but pretends she played $e$ and keeps following $S$ accordingly). Three things can happen.
\begin{enumerate}[(i)]
	\item If $S$ at some future time $t' > t$ requires Maker to claim $e'$, then our new strategy $S'$ will require her to pick $e$ at that time instead. From then on $S$ and $S'$ are identical winning strategies for Maker. 
	\item If at some future time $t' > t$ Breaker claims the edge $e$, then Maker is in no worse a position than if she had been following the strategy $S$ and Breaker had claimed the edge $e'$ at time $t'$ instead.  This is because, once again, Breaker claiming the edge $e'$ is equivalent to them claiming $e'$ and all its descendants (a set of edges that includes $e$).  Thus Maker may pretend that Breaker picked $e'$ at time $t'$ (and continue to pretend that she picked $e$ at time $t$). From then on, $S$ and $S'$ are identical winning strategies for Maker. 
	\item  If neither of the above ever occurs, then $S$ and $S'$ are indistinguishable winning strategies for Maker from time $t+1$ onwards.
\end{enumerate} 
We have thus constructed from $S$ a new winning strategy $S'$ for Maker in which she only picks edges adjacent to $C_t$ at time $t$. This concludes the proof of the Lemma.
\end{proof}
Recall that $T_{d}$ denotes the rooted $d$-regular infinite tree.  We are now ready to prove Theorem \ref{theorem: regular trees}, which stated that Maker wins the $(p,q)$-percolation game on $T_{d}$ if and only if $p(d-2) \geqslant q$.

\begin{proof}[Proof of Theorem \ref{theorem: regular trees}]
As before, let $C_t$ be the component of Maker edges that contains the root at time $t$.  By Lemma \ref{lemma: tree percolation} we may assume that both Maker and Breaker always play their edges adjacent to $C_{t}$.  For each $t \in \mathbb{N}$, let $B_{t}$ be the set of edges that Breaker has claimed at time $t$, and let $\delta_{t}$ be the number of unclaimed edges adjacent to $C_{t}$ at time $t$.  Breaker wins the percolation game if and only if there exists some $t \in \mathbb{N}$ such that $\delta_{t} = 0$.  Note that $\delta_{0} = d$.  Suppose $\delta_t>0$ at some time $t$; if it is Maker's turn to claim an edge, then $\delta_{t + 1} = \delta_{t} + d-2$, while if it is Breaker's turn then $\delta_{t + 1} = \delta_{t} -  1$.  In particular if Maker has not lost before the end of Breaker's $N$-th turn, for $N\geq 0$, then 
\[\delta_{N(p + q)} = \delta_{0} + N\left(p(d-2) - q\right)= d +N\left(p(d-2) - q\right)\]
and it is Maker's turn to play. Thus, if $p(d-2) - q \geqslant 0$, then $\delta_{t} \geq d>0$ for all $t \in \mathbb{N}$, and the game goes on indefinitely, which is a win for Maker.  On the other hand, if $p(d-2) - q < 0$ then the game lasts at most $\big\lceil \frac{d}{q - p(d-2)} \big\rceil$ rounds before $\delta_t$ hits $0$ and Breaker wins the game.
\end{proof}

With a little more work, we can use Lemma~\ref{lemma: tree percolation} to prove Theorem~\ref{theorem: bi-regular tree} for percolation games on bi-regular trees. Recall the definition of the infinite bi-regular tree $T_{a,b}$ from the introduction, Definition~\ref{def: bi-regular trees}. For $\epsilon\in \{$I, II$\} $ we denote by  $T^{\epsilon}_{a,b}$ the rooted  tree obtained from $T_{a,b}$ by selecting an arbitary vertex of Type $\epsilon$ to be the root.
\begin{proof}[Proof of Theorem~\ref{theorem: bi-regular tree}]
		Let $T=T^{\epsilon}_{a,b}$. As before, let $C_t$ denote the component of Maker edges containing the origin at time $t$. By Lemma~\ref{lemma: tree percolation}, we may assume without loss of generality that at each time $t$, both players only claim edges adjacent to $C_t$. Let $X_t$ and $Y_t$ denote the collections of vertices in $V(T)\setminus C_t$ of Type I and II respectively that are adjacent, by an unclaimed edge, to a vertex of $C_t$ at time $t$.

		The effects of Maker's and Breaker's moves on the vector $(\vert X_t\vert , \vert Y_t\vert )$ are easy to describe. If Maker claims an edge between $C_{t}$ and $X_{t}$, then $(\vert X_{t+1}\vert , \vert Y_{t+1}\vert )=(\vert X_t\vert -1, \vert Y_t\vert + a-1)$, while if she claims an edge between $C_{t}$ and $Y_{t}$, then $(\vert X_{t+1}\vert , \vert Y_{t+1}\vert )=(\vert X_t\vert + b-1, \vert Y_t\vert -1)$.  Similarly, if Breaker claims an edge between $C_{t}$ and $X_{t}$, then $(\vert X_{t+1}\vert , \vert Y_{t+1}\vert )=(\vert X_t\vert -1, \vert Y_t\vert )$, while if he claims an edge between $C_{t}$ and $Y_{t}$, then $(\vert X_{t+1}\vert , \vert Y_{t+1}\vert )=(\vert X_t\vert, \vert Y_t\vert -1)$.

		Write $d_N$ for the $\ell_1$-distance from $(\vert X_t\vert , \vert Y_t\vert )$ to $(0,0)$ at the beginning of Maker's $(N+1)$-th turn, i.e. $d_N:= \vert X_{N(p+q)}\vert +\vert Y_{N(p+q)}\vert$.
		Note that  Breaker wins the game if and only if he can ensure $d_N=0$ for some $N\in \mathbb{Z}_{\geq0}$. Finally, set $r_{\star}=\bigl\lceil\frac{p}{a}\bigr\rceil $, and write $\Delta= p(b-2) -r_{\star}(b-a)- q$.

Our claim is that Maker has a winning strategy if and only if $\Delta\geq 0$. Before we dive into the proof, let us give a heuristic for this. Both players in the $(p,q)$-percolation game on $T$ have an obvious \emph{greedy strategy}, namely: claim any edge whose addition to $C_t$ would result in the largest increase in the edge-boundary (in other words, pick an edge from $C_t$ to $Y_t$ if $Y_t\neq \emptyset$, otherwise pick an edge from $C_t$ to $X_t$).  As we show in Lemma~\ref{lemma: distance change over a turn} below, in the long run the quantity $\Delta$ essentially bounds the change in the $\ell_1$-distance $d_N$ between successive turns under these strategies. In particular, if $\Delta$ is nonnegative then Maker can ensure  that $d_N>0$ for all $N\geq 0$, while if $\Delta<0$ then Breaker can over time reduce $d_N$ to $0$ and win the game. For Maker, this is essentially the whole story. For Breaker's winning strategy, there are some subtleties: Breaker must first reduce $\vert Y_t\vert $ to zero before he can reduce the distance $d_N$. Further, if $b=a+1$, then he cannot ensure that $\vert Y_t\vert$ remain zero at the start of every Maker turn, and even showing that he can ensure $\vert Y_t\vert=0$ at the end of \emph{some} turn requires a little care.

	Having outlined the proof, we now give the details. We begin by dealing with a trivial case, By definition, we know $a\leq b$. If $a=b$ then in fact we can merge the two types into just one: $T$ is then just a copy of $T_{a}$, the rooted $a$-regular infinite tree. By Theorem~\ref{theorem: regular trees}, we thus have that Maker has a winning strategy in the $(p,q)$-percolation game if and only if $q\leq p(b-2)= p(b-2)- r_{\star}(b-a)$, as desired. So from now on, let us assume $a<b$. 
	\begin{lemma}\label{lemma: distance change over a turn}
		Suppose $d_N>0$.Then the following hold:	
		\begin{enumerate}[(i)]
			\item by using her greedy strategy, Maker can ensure that $d_{N+1}\geq d_N + \Delta$;
			\item if $Y_{N(p+q)}=\emptyset$, then by using his greedy strategy, Breaker can ensure that $d_{N+1}\leq d_N +\Delta $.
		\end{enumerate}
	\end{lemma}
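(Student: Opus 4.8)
The plan is to track the pair $(\vert X_t\vert,\vert Y_t\vert)$ over a single full turn (one Maker move of $p$ edges followed by one Breaker move of $q$ edges, or vice versa, depending on convention — here a "turn" $N\mapsto N+1$ consists of Maker's $p$ edges then Breaker's $q$ edges) and bound the resulting change in $d = \vert X\vert + \vert Y\vert$. Recall from the move analysis already given: a Maker edge to $Y$ changes $(\vert X\vert,\vert Y\vert)$ by $(b-1,-1)$, so it changes $d$ by $b-2$; a Maker edge to $X$ changes $d$ by $a-2$; a Breaker edge to either side changes $d$ by $-1$. Since $a<b$ (the case $a=b$ having been disposed of), a Maker edge to $Y$ is strictly better for her than one to $X$, which is exactly why the greedy strategy prioritises $Y$.

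For part (i), I would argue as follows. Maker plays greedily: on her turn she repeatedly claims an edge from $C_t$ to $Y_t$ as long as $Y_t\neq\emptyset$, and only claims edges to $X_t$ when $Y_t$ is exhausted. The key structural point is that claiming a Maker edge to $X$ injects $a-1\geq 1$ new Type II vertices into $Y$, so $Y$ can only be exhausted a bounded number of times; more precisely, among Maker's $p$ edges, the number that go to $X$ is at most $r_\star = \lceil p/a\rceil$. (Heuristically: each "batch" of up to $a$ consecutive moves can contain at most one move to $X$, since one such move replenishes $Y$ by $a-1$, enough to feed the next $a-1$ moves to $Y$; one should check the worst case where $Y$ starts empty.) Hence Maker's $p$ edges contribute at least $(p-r_\star)(b-2) + r_\star(a-2)$ to $d$, which equals $p(b-2) - r_\star(b-a)$. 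Breaker's $q$ edges then subtract at most $q$. Adding these, and using $d_N>0$ so that the greedy moves are actually available throughout (no premature termination), gives $d_{N+1}\geq d_N + p(b-2) - r_\star(b-a) - q = d_N + \Delta$. A point needing care: one must confirm that when $Y_t=\emptyset$ and Maker is forced onto $X_t$, there genuinely are edges of $C_t$ to $X_t$ available — this follows because $d_t>0$ means $\vert X_t\vert + \vert Y_t\vert>0$, and if $Y_t=\emptyset$ then $\vert X_t\vert>0$; and similarly each Maker move to $X$ strictly increases $\vert Y\vert$, so the supply for subsequent greedy $Y$-moves is maintained.

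For part (ii), the hypothesis $Y_{N(p+q)}=\emptyset$ is essential. I would argue that when Maker moves first on this turn with $Y$ empty, she can do no better per edge than $b-2$ (her first edge must go to $X$, gaining only $a-2$, but subsequent edges may go to the freshly-created $Y$), so over her $p$ edges the gain in $d$ is at most $p(b-2) - r_\star(b-a)$ — mirroring the bound in (i) but now as an upper bound, using that at least $r_\star$... wait, actually the clean version: since $Y$ starts empty, at least $\lceil p/a\rceil$ of her edges must go to $X$ (each $X$-move only replenishes $Y$ by $a-1$, sustaining at most $a-1$ further $Y$-moves), so her contribution to $d$ is at most $p(b-2)-r_\star(b-a)$. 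Then Breaker, playing greedily (to $Y$ if nonempty, else to $X$), removes exactly $q$ from $d$ regardless. Summing gives $d_{N+1}\leq d_N+\Delta$. I expect the main obstacle to be the careful combinatorial bookkeeping of exactly how many of Maker's $p$ greedy edges are forced to land in $X$ — pinning down that this count is exactly $\lceil p/a\rceil$ in the extremal configuration (and at most that in all others for the lower bound, at least that for the upper bound), handling the boundary behaviour when $Y$ empties mid-turn, and making sure that the $d_N>0$ hypothesis rules out the degenerate case where the component gets cut off partway through the turn so that the per-turn accounting is valid as stated.
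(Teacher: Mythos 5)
Your argument is correct and is essentially the paper's own proof: both bound the per-turn change of $d_N=\vert X_t\vert+\vert Y_t\vert$ by counting that a greedy Maker needs at most (and, when $Y$ starts empty, exactly) $r_\star=\lceil p/a\rceil$ edges to Type I vertices while any Maker must use at least $r_\star$ such edges when $Y_{N(p+q)}=\emptyset$, giving a gain of at least (resp.\ at most) $r_\star(a-2)+(p-r_\star)(b-2)=\Delta+q$, after which Breaker's $q$ claims subtract $q$. The availability point you flag (an edge to $X_t$ exists since $d_N>0$, and each such move replenishes $Y_t$) is exactly the paper's parenthetical justification, so no gap remains.
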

	\begin{proof}\ \newline
		\noindent \textbf{Part  (i):}	proving the first part of this lemma is simple counting. If $Y_{N(p+q)}=\emptyset$, then Maker's strategy dictates that she first claims an edge from $C_t$ to a Type I vertex (such an edge exists since $d_N>0$) and then edges from $C_t$ to Type II vertices whenever possible. In particular, she claims exactly $r_{\star}$ edges to Type I vertices and $(p-r_{\star})$ edges to Type II vertices. Otherwise if $Y_{N(p+q)}\neq \emptyset$ Maker can get away with claiming only $r$ edges to Type I vertices, for some $r\leq r_{\star}$. Either way, Maker's moves increase the $\ell_1$-distance from $(\vert X_t\vert, \vert Y_t\vert )$ to $(0,0)$ by at least 
		\begin{equation}
		r_{\star}(a-2)+(p-r_{\star})(b-2)=\Delta +q. \nonumber
		\end{equation}
Breaker's move then reduce the distance again by $q$, whence at the start of Maker's next turn $d_{N+1}\geq d_N +\Delta$.
		\newline
		\noindent \textbf{Part  (ii):} as noted above, if  $Y_{N(p+q)}=\emptyset$ then during her $(N+1)$-th turn Maker must claim $r$ edges from $C_t$ to $X_t$ for some $r$:  $r_{\star}\leq r\leq p$. Thus $\vert X_t\vert + \vert Y_t \vert $ increases by at most $\Delta+q$ as a result of Maker's moves. By following his greedy strategy, Breaker then ensures that $d_{N+1}\leq d_N +\Delta$. 
	\end{proof}	
Lemma~\ref{lemma: distance change over a turn} immediately implies that Maker has a winning strategy for the $(p,q)$-percolation game on $T$ whenever $\Delta\geq 0$. Thus all that remains to show is that Breaker has a winning strategy if $\Delta <0$. Since $p,q,a,b$ are integers and since increasing Breaker's power can only help him, it suffices to show that Breaker has a winning strategy if $\Delta=-1$, i.e. if 
\begin{align}\label{eq: q assumption}
q= p(b-2)-r_{\star}(b-a) +1
\end{align}We have two cases to consider.
\newline
\noindent\textbf{Case 1: $b\geq a+2$.} In this case, observe that in any turn, Maker's moves can increase $Y_t$ by at most $p(a-1)$. Since we have $a\geq 2$, $p\geq 1$ and
\begin{eqnarray}
q-p(a-1)&=& (p-r_{\star})(b-a) -p+1\geq 2\left(p-\frac{p+a-1}{a}\right)-p+1 \nonumber \\
&=& \frac{(a-2)(p-1)}{a}  \geq 0,
\end{eqnarray}
Breaker's greedy strategy ensures that the number of edges to Type II vertices available to Maker at the start of her turn is non-increasing. Moreover in any turn where the number of such edges has not strictly decreased, the number of edges to Type I vertices available to Maker must have gone down by at least $p$. Thus within at most $\vert X_0\vert/p$ turns, $\vert Y_{N(p+q)}\vert $ must have decreased by  at least $1$. What is more, at the beginning of the first turn $N$ on which $\vert Y_{N(p+q)}\vert <\vert Y_{0}\vert $ we have $\vert X_{N(p+q)}\vert \leq \vert X_0\vert +p(b-1)$ (since $\vert X_{(N-1)(p+q)}\vert \leq \vert X_0\vert$ and since Maker's moves during her $N$-th turn can increase $\vert X_t\vert $ by at most $p(b-1)$). This readily implies that within at most
\[\frac{1}{p}\Bigl(\vert X_0\vert + \left(\vert X_0\vert +p(b-1)\right)+ \left(\vert X_0\vert +2p(b-1) \right)+\cdots +\left(\vert X_0\vert +(\vert Y_0\vert-1) p(b-1)\right) \Bigr)\leq (b-1)(d_0)^2\]
turns, $Y_t$ is empty at the beginning of Maker's turn. Suppose this occurs at the start of turn $N_0$, i.e. $Y_{N_0(p+q)}=\emptyset$. Then Breaker's greedy strategy further ensures that $Y_{(N_0+1)(p+q)}=\emptyset$ and, by Lemma~\ref{lemma: distance change over a turn} part (ii), that $d_{N_0+1}\leq d_{N_0} +\Delta= d_{N_0} -1$. Within a further $N_1= d_{N_0}$ turns, Breaker will therefore win the game. Breaker's greedy strategy thus ensures his victory in the $(p,q)$-percolation game within at most a total of $N_0+N_1\leq N_0+ (p (b-2)-q) N_0\leq \left(p(b-2)-q+1 \right)(b-1)(d_0)^2$ turns, where the bound on $N_1$ is from the trivial upper bound on the change in the $\ell_1$-distance $d_N$ between consecutive Maker turns. So even if Maker were given a head start in the game (by being allowed to claim some edges before the game starts, thus making $d_{0}$ large), Breaker would still win.
\newline
\noindent\textbf{Case 2: $b= a+1$.} We are analysing the game on the tree $T_{a,a+1}$. Substituting the value of $b=a+1$ into our assumption~\eqref{eq: q assumption} on the value of $q$, we obtain 
\begin{align}\label{eq: q assumption mod}
q-p(a-1)=-(r_{\star}-1).
\end{align}
In particular, Breaker cannot prevent Maker from increasing the number of edges to Type II vertices between turns, and more care is needed to show his greedy strategy is successful.

Suppose that the game starts from some position $(X_0, Y_0)$, and that during the first $N$ turns, Breaker is able to claim only edges to Type II vertices. Let $\theta Np$ denote the number of edges to Type I vertices claimed by Maker during her first $N$ turns. Then the following hold:
\begin{eqnarray}
\vert X_{N(p+q)}\vert & =& \vert X_0\vert  + Np \left(-\theta +a(1-\theta)\right),
 \label{eq: x evolution}\\
 \vert Y_{N(p+q)}\vert & = & \vert Y_0\vert  + Np \left((a-1)\theta -(1-\theta)-\frac{q}{p}\right).\label{eq: y evolution}
\end{eqnarray}
(Recall that in this case we have $b=a+1$, so the $a$ in equation~\eqref{eq: x evolution} is equal to $b-1$.) Since $\vert X_{N(p+q)}\vert, \vert Y_{N(p+q)}\vert \geq 0$, inequalities \eqref{eq: x evolution} and \eqref{eq: y evolution}  together imply
\begin{align}\label{eq: theta bounds}
- \frac{\vert Y_0\vert}{aNp} + \frac{p+q}{ap}\leq \theta \leq \frac{a}{a+1} + \frac{\vert X_0\vert}{(a+1)Np}.
\end{align}
Now by~\eqref{eq: q assumption mod}, $q=p(a-1)-r_{\star}+1$. Substituting this value into the difference $\frac{p+q}{pa} -\frac{a}{a+1}$ and using the bound $r_{\star}=\left\lceil \frac{p}{a}\right\rceil< \frac{p}{a}+1$, we get
\begin{align*}
\frac{(p+q)}{pa}-\frac{a}{a+1}= \left(\frac{pa-r_{\star}+1}{pa}\right) -1 + \frac{1}{a+1} &= 1- \frac{(r_{\star}-1)}{pa} -1+\frac{1}{a+1}\\&> -\frac{(p/a)}{pa} +\frac{1}{a+1}=\frac{1}{a+1}-\frac{1}{a^2}>0,
\end{align*}
for all $a\geq 2$.
Together with~\eqref{eq: theta bounds}, this implies that there exists some finite constant $N_0$ depending only on $a$, $p$ and $d_0=\vert X_0\vert +\vert Y_0\vert$ such that at the latest on his turn $N_0$ Breaker is forced to claim at least one edge to a vertex of Type I. Explicitly, the difference between the left-hand side of~\eqref{eq: theta bounds} and the right-hand side is at least
\begin{align*}
\frac{p+q}{pa}-\frac{a}{a+1}-\frac{d_0}{aNp}>\frac{1}{a+1}-\frac{1}{a^2}-\frac{d_0}{aNp},
\end{align*}
which is strictly positive for all $N$ satisfying \begin{align}\label{eq: bound on phase lenght bireg trees}
N\geq N_0= \Bigl\lceil \frac{d_0}{ap}{\left(\frac{1}{a+1}-\frac{1}{a^2}\right)}^{-1}\Bigr\rceil=\Bigl\lceil \frac{a d_0}{p(a^2-a-1)}\Bigr\rceil.\end{align}
This contradicts~\eqref{eq: theta bounds}, and shows that the assumption that Breaker could claim only edges to Type II vertices must have failed by his turn $N_0$ at the latest.

We now divide the game into phases. At the beginning of each phase it is Maker's turn to play, and $Y_t$ is empty. Throughout a phase, Breaker follows his greedy strategy and claims only edges to Type II vertices; a phase then ends with the first turn in which Breaker claims an edge to a vertex of Type I. For completeness, we say Phase $0$ begins with Maker's first move and ends at the end of the first turn in which Breaker claims an edge to a Type I vertex.

We have just shown above that Breaker's greedy strategy ensures every phase is finite. We now claim that for all $k\geq1$, the $\ell_1$-distance $d_{N}$ at the start of Phase $(k+1)$ is strictly smaller than it was at the start of Phase $k$.  Observe that this will immediately imply that Breaker's greedy strategy wins the game for Breaker within some finite number of turns, thereby completing the proof of Theorem~\ref{theorem: bi-regular tree}.

For convenience, let us shift the game-time so that Phase $k$ begins at time $0$, with thus $d_0 =\vert X_0\vert$ and $\vert Y_0\vert=0$, and let us assume the Phase $k$ ends at the $N$-th turn. Let $\theta Np$ denote the number of edges to Type I vertices claimed by Maker during Phase $k$. Then we have
 \begin{align}\label{eq: x evolution redux}
 d_N=\vert X_{N(p+q)}\vert & = d_0 + Np \left(-\theta +a(1-\theta)\right) - q',
 \end{align}
 \begin{align}\label{eq: y evolution redux}
0= \vert Y_{N(p+q)}\vert & = Np \left((a-1)\theta -(1-\theta)-\frac{q}{p}\right)+q',
 \end{align}
where $q'\in [q]$ is the number of edges to Type I vertices Breaker claims in his last turn of the phase. 
From~\eqref{eq: y evolution redux} we can derive a lower bound 
\begin{align}\label{eq: theta lower bound redux}
\theta = -\frac{q'}{aNp} + \frac{p+q}{pa}\geq -\frac{q}{ap}+\frac{p+q}{ap}=\frac{1}{a}.
\end{align}
Adding \eqref{eq: x evolution redux}  and \eqref{eq: y evolution redux}, and substituting in our lower bound~\eqref{eq: theta lower bound redux} for $\theta$, we obtain
 \begin{align*}
 d_N=d_N+0= d_0 + Np\left(a-1 -\theta-\frac{q}{p} \right)&\leq d_0+N\left(p(a-1)-\frac{p}{a}-q\right)\\
 & = d_0 +N\left(p(a-1)- \frac{p}{a}- p(a-1)+(r_{\star}-1)\right)<d_0,
 \end{align*}
 where in the last line we used equality~\eqref{eq: q assumption mod} to eliminate $q$ and the bound $r_{\star}-1=\lceil\frac{p}{a}\rceil-1< \frac{p}{a}$ to get the strict inequality. Thus $d_N<d_0$, as claimed, and Breaker's greedy strategy is a winning one. Appealing to \eqref{eq: bound on phase lenght bireg trees}, we can moreover give an upper bound on the number of turns before Breaker's victory, even if Maker is given a head start.

 Indeed, suppose Maker was allowed to claim some connected set of edges before the start of the game, so that at the beginning of Breaker's very first turn, $\vert X_t\vert +\vert Y_t\vert =d$.  Then by~\eqref{eq: bound on phase lenght bireg trees}, Phase $0$ lasts at most $N_1=\left\lceil \frac{ad}{p(a^2-a-1)}\right\rceil<\frac{8d}{pa}$ Breaker turns (here we use the bound $a^2-a-1\geq a^2/4$ for $a\geq 2$). At the beginning of Phase $1$, $\vert X_t\vert +\vert Y_t\vert$ is then at most $d'= N_1((a-1)p-q)<8d$. There are then at most $d'$ Phases played before Breaker wins, each of which lasts at most $N_2=\left\lceil\frac{ad'}{p(a^2-a-1)}\right\rceil<\frac{64d}{pa}$ Breaker turns. Thus Breaker wins after playing at most
\[N_1+ N_2 d' < \frac{8d}{pa}+8d\cdot\frac{64d}{pa}<\frac{1000 d^2}{pa}\] of his turns.
\end{proof}	
\noindent Let us remark here that the proof of Theorem~\ref{theorem: bi-regular tree} essentially boiled down to the analysis of the following vector game.
\begin{definition}[Vector game]
Let $a',b',p,q,x_0, y_0$ be positive integers. Two players, Maker and Breaker, play in alternating turns, with Maker playing first. At the onset of the game, the players are given a play vector $(X, Y)=(x_0,y_0)$ in the positive quadrant of $\mathbb{Z}^2$, which they take turns at modifying. 

On each of her turns, Maker is allowed to perform $p$ moves, where a Maker move consists of changing the play vector $(X, Y)$ by adding $(-1, a')$ or $(b',-1)$ to it, subject to the restriction that the play vector must remain inside the positive quadrant of $\mathbb{Z}^2$. On each of his turns, Breaker is allowed to perform $q$ moves, where a Breaker move consists of changing the play vector $(X,Y)$ by adding $(-1,0)$ or $(0,-1)$ to it, again subject to the restriction that the play vector remains inside the positive quadrant of $\mathbb{Z}^2$. Breaker wins the game if the play vector ever takes the value $(0,0)$, while Maker is said to have a winning strategy if she can prevent this from happening.
\end{definition}
This vector game is an instance of the broader class of \emph{energy games} and \emph{vector addition systems with states (VASS)}, which are widely studied within computer science, albeit primarily from a complexity perspective (see e.g.~\cite{BrazdilJancarKucera10, FahrenbergJuhlLarsenSrba11}). Such games are motivated by the problem of a manufacturer who uses raw materials to produce various items. The manufacturer can buy batches of raw materials, while customers buy items from the manufacturer. Customer demand is fickle, perhaps even adversarial, and the manufacturer's aim is to maintain sufficient levels of raw materials and cash to run his operation indefinitely. This is clearly reminiscent of Maker's aim to avoid $(X, Y)=(0,0)$ in the vector game above, and points to a possible application of our Maker-Breaker games and especially of our winning Maker strategies.

\section{The square integer lattice}\label{section: results on the square integer lattice}
Our goal in this section is to prove Theorems~\ref{theorem: (2q,q)-game on integer lattice} and~\ref{theorem: (p,2p)-game on integer lattice} on percolation games on the square integer lattice $\mathbb{Z}^2$. Since $\mathbb{Z}^2$ is a planar graph (with its natural embedding in $\mathbb{R}^2$), we can define its \emph{planar dual} $(\mathbb{Z}^2)^{\star}$, which is the graph with a vertex for each face of $\mathbb{Z}^2$ and an edge $e^{\star}$ for each edge $e$ of $\mathbb{Z}^2$, where $e^{\star}$ joins the two vertices of $(\mathbb{Z}^2)^{\star}$ corresponding to the two faces of $\mathbb{Z}^2$ in whose boundary $e$ lies; we say that such edges $e$, $e^{\star}$ form a dual pair.

A key property of the square integer lattice is that it is \emph{self-dual}: $(\mathbb{Z}^2)^{\star}$ is isomorphic to $\mathbb{Z}^2$. It is customary to represent $(\mathbb{Z}^2)^{\star}$ in the plane as a copy of $\mathbb{Z}^2$ shifted by $(1/2, 1/2)$, so that its vertex set is $\mathbb{Z}^2+(1/2, 1/2)$ and the straight line segments corresponding to a dual pair of edges $(e,e^{\star})$ intersect in their midpoint.  If $e$ is a horizontal edge in $\mathbb{Z}^2$, that is $e = \{(x,y),(x+1,y)\}$ for some $x,y\in \mathbb{Z}$, then we denote both $e$ and its dual $e^{\star}$ by their midpoint, so that $e =  (x+0.5,y)$ and $e^{*} = (x+0.5,y)^{*}$.   Similarly, for a vertical edge $e = \{(x,y),(x,y+1)\}$ for some $x,y\in \mathbb{Z}$, we denote both $e$ and its dual $e^{\star}$ by their midpoint, so that $e =  (x,y+0.5)$ and $e^{*} =  (x,y+0.5)^{*}$. This identification of edges/dual edges with their midpoints will allow us to view our Maker--Breaker games as being played on two boards: the two players compete to claim midpoints, with one player claiming them to obtain edges from the board $\mathbb{Z}^2$, while the other does so to obtain edges from the dual board $\left(\mathbb{Z}^2\right)^{\star}$.

A key tool used in this section will be a result from~\cite{DayFalgasRavry18a} on the following auxiliary game. 
\begin{definition}[$q$-double-response game]
	Let $\mathbb{Z}\times P_n$ be the subgraph of $\mathbb{Z}^{2}$ induced by the vertex set $\{(x,y):x \in \mathbb{Z}, y \in [n]\}$ and let $\left(\mathbb{Z}\times P_n\right)^{\star}$ be the graph of edges dual to the edges in $\mathbb{Z}\times P_n$.  The $q$\textit{-double-response game} is a positional game played by a horizontal player $\mathcal{H}$ and a vertical player $\mathcal{V}$. The game begins with $\mathcal{V}$ playing first. On each of his turns, $\mathcal{V}$ first picks an integer $r \in [q]$ and then claims $r$ as-yet unclaimed edges in $\mathbb{Z}\times P_n$ for himself;  $\mathcal{H}$ then responds by claiming $2r$  as-yet unclaimed edges for herself in response to $\mathcal{V}$'s move.

	The vertical player $\mathcal{V}$ wins if he is able to claim a set of edges $D$ whose dual $D^{\star}$ is a top-bottom crossing path in $\left(\mathbb{Z}\times P_n\right)^{\star}$ (equivalently, the edges in $D$ are a  vertical cut through $\mathbb{Z}\times P_n$).  The horizontal player $\mathcal{H}$ wins if she is able to indefinitely prevent $\mathcal{V}$ from building such a dual path.
	
\end{definition}
\begin{proposition}\label{proposition: double-response}[\cite[Theorem~4.1]{DayFalgasRavry18a}]
If $n \geqslant q+1$, then $\mathcal{H}$ has a winning strategy for the $q$-double-response game on $\mathbb{Z}\times P_n$.
\end{proposition}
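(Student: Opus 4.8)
The plan is to pass to the dual picture and recast the statement as a Maker--Breaker \emph{hitting} problem. By planar duality, a set $D$ of edges of $\mathbb{Z}\times P_n$ has the property that $D^{\star}$ contains a top--bottom crossing path of $(\mathbb{Z}\times P_n)^{\star}$ precisely when $D$ contains a \emph{vertical cut}, i.e.\ a finite set of edges whose removal separates the left half of the strip from the right half. Since $\mathcal{V}$ and $\mathcal{H}$ are in effect competing for the same pool of edges (the edges of $\mathbb{Z}\times P_n$), $\mathcal{H}$ wins if and only if she can guarantee that $\mathcal{V}$'s edge set never contains a vertical cut; that is, $\mathcal{H}$ must act as Breaker against the (infinite) hypergraph whose hyperedges are the minimal vertical cuts. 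So the first step is to describe these minimal cuts: each is a monotone ``staircase'' dual path from the bottom face to the top face, crossing each of the $n$ rows of horizontal edges of $\mathbb{Z}\times P_n$ exactly once; the cheapest ones consist of exactly $n$ horizontal edges stacked in a single column gap, and in general a minimal cut using $k$ edges is confined to a bounded window of consecutive column gaps whose width grows with $k-n$.

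The second step is to phrase $\mathcal{H}$'s strategy as the maintenance of a left--right \emph{shield}. At every time $t$, $\mathcal{H}$ maintains a bi-infinite path $P_t$ across the strip using only edges she has claimed or edges still unclaimed; outside the finite window of columns in which $\mathcal{V}$ has so far played, $P_t$ may be taken to run straight along a fixed row, so only finitely much genuine work is ever required. The existence of such a $P_t$ is equivalent to $\mathcal{V}$'s edges not yet forming a cut, so it suffices for $\mathcal{H}$ to keep some shield alive forever. To make this enforceable, $\mathcal{H}$ maintains a ``danger'' potential that records, over all partial $\mathcal{V}$-cuts, how few further edges $\mathcal{V}$ would need to complete each one, and on each turn she spends her $2r$ moves to claim the edges that most reduce this potential --- roughly, the horizontal edge completing the currently cheapest cut, together with its mirror and a book-keeping move used to extend or pre-build the shield at the frontier of $\mathcal{V}$'s activity.

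The third step, which I expect to be the main obstacle, is the accounting showing that the doubled budget suffices exactly under the hypothesis $n\geq q+1$. The key mechanism is that once $\mathcal{H}$ has claimed a single edge of a forming cut, any cut $\mathcal{V}$ can still complete through that window must detour around $\mathcal{H}$'s edge, hence use an extra horizontal crossing and so be strictly longer; and since $n\geq q+1$, $\mathcal{V}$ cannot fill a column gap from the danger threshold to completion within one of his turns, so $\mathcal{H}$ always gets to react in time. The delicate situation is a genuine double threat --- $\mathcal{V}$ advancing two partial cuts in different windows so that each is one $\mathcal{V}$-turn from completion --- and here the factor-two response is precisely what lets $\mathcal{H}$ kill both (one edge apiece), while the height bound is what prevents $\mathcal{V}$ from instead manufacturing, inside a single window, a branching configuration that no constant number of blocking edges can neutralise. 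The technical heart of the argument is to choose the potential so that this local reasoning globalises: one must verify that $\mathcal{H}$'s greedy response provably keeps the potential below the losing level, uniformly over the infinitely many possible cuts and over all of $\mathcal{V}$'s choices of $r\in[q]$, and this is where I would expect the bulk of the work (and the careful use of $n\geq q+1$) to lie.
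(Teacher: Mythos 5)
First, a point of reference: this paper does not prove Proposition~4.1 at all --- it is imported wholesale from the prequel \cite[Theorem~4.1]{DayFalgasRavry18a}, so there is no in-paper proof to match your argument against; your attempt has to stand on its own as a proof of the crossing-game result, and as written it does not. The overall framing (pass to the dual, view $\mathcal{H}$ as blocking the family of minimal vertical cuts, equivalently maintain a left--right ``shield'' of claimed-or-unclaimed edges) is a reasonable setup, but the entire content of the theorem is the verification you explicitly defer: you never define the danger potential, never specify which $2r$ edges $\mathcal{H}$ actually claims beyond ``the edge completing the currently cheapest cut, together with its mirror and a book-keeping move'', and never prove the invariant that this keeps every partial cut short of completion. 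Saying that the factor two handles a double threat ``one edge apiece'' is exactly the step that needs proof and is not obviously true: a single dual component of $\mathcal{V}$'s edges that is one step from completion typically has \emph{several} distinct completing edges (it can finish through different squares, or extend sideways first), so one blocking edge per threat does not in general neutralise it, and $\mathcal{V}$ can also grow one component so that it threatens to complete in many places at once. Ruling this out, uniformly over all of $\mathcal{V}$'s choices of $r\in[q]$ and over arbitrarily spread-out play, is precisely where the hypothesis $n\geq q+1$ must be used quantitatively, and your sketch gives no mechanism for it --- you acknowledge this yourself, which means the proposal is a plan rather than a proof.

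There is also a structural error that would infect any potential built as you describe: it is not true that every minimal vertical cut of $\mathbb{Z}\times P_n$ is a monotone staircase crossing each of the $n$ rows of horizontal edges exactly once. Minimal cuts correspond to \emph{simple} top--bottom paths in the dual, and such a path may use duals of vertical edges to move sideways and re-ascend, crossing a given row of horizontal edges three or more times; a ``danger'' functional ranging only over monotone cuts would therefore miss genuine threats. So the gap is twofold: the combinatorial description of the threat hypergraph is wrong as stated, and the central accounting argument (the choice of potential and the proof that $\mathcal{H}$'s doubled response keeps it below the losing level when $n\geq q+1$) is missing. If you want to complete this, you need to write down an explicit strategy or weight function for $\mathcal{H}$ --- as is done in the prequel \cite{DayFalgasRavry18a} --- and prove the preservation lemma against all of $\mathcal{V}$'s moves, including non-monotone and multi-component threats.
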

\begin{remark}\label{remark: horizontal crossing path}
If $\mathcal{H}$ has a winning strategy for the $q$-double-response game on $\mathbb{Z}\times P_n$, then for any $m\geq 0$ she also has a winning strategy for the $q$-double-response game restricted to the subgraph $P_m \times P_n$ induced by the vertices $(x,y)$: $x\in [m], y\in [n]$. By~\cite[Lemma 2.1]{DayFalgasRavry18a}, this implies she has a strategy for building a left-right crossing path through the rectangle $P_m \times P_n$.
\end{remark}
Theorem~\ref{theorem: (2q,q)-game on integer lattice} follows as an easy consequence of Proposition~\ref{proposition: double-response}.
\begin{proof}[Proof of Theorem \ref{theorem: (2q,q)-game on integer lattice}]
On her  first turn, Maker claims all $p$ edges of the vertical path $P$ from the origin $(0,0)$ to $(0,p)$.  From then on, Breaker can win the game only if he manages to claim a set of edges $D$ such that the corresponding set of dual edges $D^{\star}$ contains a cycle  in $\left(\mathbb{Z}^2\right)^{\star}$ with $P$ in its interior; in particular,  $D^{\star} \cap \left(\mathbb{Z}\times P_p\right)^{\star}$ would have to contain a top-bottom crossing path of $\left(\mathbb{Z}\times P_p\right)^{\star}$.  After completing her first turn, Maker follows the horizontal player $\mathcal{H}$'s winning strategy for the $q$-double-response game on $\mathbb{Z}\times P_p$ (a strategy whose existence is given by Proposition~\ref{proposition: double-response} and our assumption that $p \geqslant 2q \geqslant q+1$). This prevents Breaker from ever creating a top-bottom dual crossing path of $\mathbb{Z}\times P_p$, and, as such, prevents Breaker from ever winning the $(p,q)$-percolation game on $\mathbb{Z}^{2}$. Maker thus has a winning strategy in this case.
\end{proof}

To prove Theorem \ref{theorem: (p,2p)-game on integer lattice}, we shall need a result about a variant of a specific instance of the \textit{box-game}. The box-game was introduced by Chv{\'a}tal and Erd{\H o}s in~\cite{ChvatalErdos78}, and solved in full generality by Hamidoune and Las Vergnas~ \cite{HamidouneLasVergnas87}.  Our variant of the box-game is played as follows.
\begin{definition}[$(q,M,N)$-double-response box-game]\label{def: box double response game}
In the $(q,M,N)$-double-response box-game, two players, BoxBreaker and BoxMaker, play in alternating turns on a board consisting of $N$ boxes, each containing $M$ items. BoxBreaker plays first, and on each of his turns he picks an integer $r\in [q]$ and removes  $r$  boxes from the board. On each of her turns, BoxMaker claims up to a total of $2r$ items from the boxes remaining on the board (where $r$ was the number of boxes removed by BoxBreaker in the preceding turn). BoxMaker wins the game if she manages to claim all $M$ items from some box before BoxBreaker removes it.
\end{definition}
\begin{lemma}\label{lemma: q-double-response-box-game}
If $N \geqslant 4(q+2)^{M}$, then BoxMaker has a winning strategy for the $(q,M,N)$-double-response box-game.
\end{lemma}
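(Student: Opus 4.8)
The plan is to induct on $M$, showing the sharper statement that BoxMaker wins whenever the number of boxes is at least $f(M):=(q+1)(q+2)^{M-1}$; since $f(M)\le 4(q+2)^M$ this implies the lemma, with room to spare. The base case $M=1$ is immediate: after BoxBreaker's first move at most $q$ of the $N\ge q+1$ boxes have been removed, so some box with its single item survives, and BoxMaker claims that item (she is permitted $2r\ge 2\ge 1$ claims), completing a box and winning.

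For the inductive step, suppose $N\ge f(M)=(q+2)f(M-1)$ boxes of size $M$ are on the board, and write $G:=f(M-1)$. BoxMaker plays in two phases. Call a box \emph{touched} once BoxMaker has claimed exactly one of its items, and \emph{fresh} otherwise. In \emph{Phase 1}, whenever BoxBreaker removes $r$ boxes BoxMaker responds by claiming one item in each of $2r$ distinct fresh boxes; she continues this until the number of touched boxes first reaches $G$, at which point Phase 1 ends (so that it is then BoxBreaker's turn, with at least $G$ touched boxes present, each having exactly $M-1$ unclaimed items). The counting that makes this work: on each Phase 1 turn the touched-count changes by $2r$ minus the number of touched boxes BoxBreaker just deleted, hence increases by at least $r\ge 1$, so Phase 1 lasts at most $G$ turns; moreover the touched-count always dominates the total number $R$ of boxes BoxBreaker has removed so far (each removal contributes $\le 1$ to a quantity that BoxMaker's responses increase by $\ge 2$), so while Phase 1 is running one has $R<G$, whence the fresh supply has decreased by at most (fresh removed)$+$(boxes touched)$\le R+2R<3G$ and therefore stays above $(q+2)G-3G=(q-1)G$; for $q\ge 2$ this comfortably exceeds $3q$, which is enough for BoxMaker always to find the $2r\le 2q$ fresh boxes her response requires.

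In \emph{Phase 2}, BoxMaker fixes a set $S$ of exactly $G=f(M-1)$ touched boxes and, viewing each as a box with $M-1$ items (its currently unclaimed ones), plays on $S$ the winning strategy for the $(q,M-1,G)$-double-response box-game supplied by the induction hypothesis. When BoxBreaker removes $r$ boxes, of which $r'\le r$ lie in $S$, BoxMaker uses $2r'\le 2r$ of her available claims exactly as that strategy prescribes (discarding the rest); a turn with $r'=0$ leaves the position on $S$ unchanged and is simply skipped. Since only finitely many boxes lie outside $S$, BoxBreaker cannot avoid $S$ forever, so the auxiliary game on $S$ plays out and BoxMaker claims all $M-1$ remaining items of some box of $S$ before it is removed — i.e. all $M$ of its items — and wins.

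The one genuinely delicate point is the Phase 1 bookkeeping: I must rule out BoxBreaker's double threat of slowing the growth of the touched-count (by deleting touched boxes) while simultaneously draining the fresh supply, and the estimate above shows both are controlled at once provided $N\ge(q+2)G$. The sole exception is $q=1$ (where BoxBreaker has no choice of $r$), for which the bound $(q-1)G$ degenerates; there the cruder estimate that Phase 1 lasts $\le G$ turns and removes $\le G$ boxes already suffices once $N\ge 2G+1$. Either way the result is well within $4(q+2)^M$, so the slack in the statement absorbs this case and all rounding, and nothing else in the argument is more than routine.
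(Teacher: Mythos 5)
Your argument is correct in substance, but it takes a genuinely different route from the paper. The paper runs a single game through $M+1$ phases, maintaining the invariant that at the start of Phase $k$ at least $4(q+2)^{M-k}$ surviving boxes have exactly $k$ claimed items, and controls BoxBreaker's destruction of partially-filled boxes with the potential $S=2A+B$ (which his $r$ removals lower by at most $2r$ and her $2r$ claims restore). You instead induct on $M$: a single build-up phase creates $G=f(M-1)$ touched boxes, and then you compose strategies, simulating the $(q,M-1,G)$-game on a fixed set $S$ of touched boxes, using that BoxBreaker must remove at least one box per turn and so cannot avoid $S$ forever. The recursion buys a slightly sharper threshold ($(q+1)(q+2)^{M-1}$ instead of $4(q+2)^M$) and a modular structure; the paper's potential argument avoids both the strategy-composition step and the stalling observation, handling all levels uniformly in one game.

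Two of your explicit estimates are, however, wrong as stated, though both are repairable and neither sinks the argument. First, with your sharp $f$, the claim that $(q-1)G$ ``comfortably exceeds $3q$'' fails for $(q,M)=(2,2)$ (where $(q-1)G=3<6$) and $(q,M)=(3,2)$; the correct requirement from your own bookkeeping is that at her response in turn $i$ the fresh supply is at least $N-3R_{i-1}-r_i\geq N-3(G-1)-q$, so it suffices that $N\geq 3G+3q-3$, which $(q+2)G$ does give since $G\geq q+1\geq 3$ for $q\geq 2$ --- so the conclusion stands, but by a tighter count than the one you wrote. Second, in your $q=1$ aside, ``$N\geq 2G+1$ suffices'' forgets that BoxMaker's own $2$ claims per turn also deplete the fresh pool; the count $N-i-2(i-1)\geq N-3G+2$ shows you need $N\geq 3G$, which again your $f$ (and, a fortiori, the lemma's bound) provides. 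The cleanest fix is simply to run your induction with the lemma's own threshold $4(q+2)^M$ in place of $f$: then $G=4(q+2)^{M-1}\geq 12$ and all your stated inequalities hold with room to spare, for every $q\geq 1$.
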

\begin{proof}
BoxMaker's winning strategy proceeds in $M+1$ phases.  For every integer $k \in [0,M]$, her strategy (described below) will guarantee that when Phase $k$ begins the following hold:
\begin{enumerate}[(i)]
	\item it is BoxBreaker's turn to play
	\item there are at least $4(q+2)^{M-k}$ boxes on the board from which BoxMaker has claimed $k$ items.
\end{enumerate}
In particular, the above entails that at the beginning of Phase $M$ there is at least one box on the board from which BoxMaker has claimed all $M$ items, so this is a winning strategy for BoxMaker.

We now show by induction on $k$ that BoxMaker can guarantee (i) and (ii) for every $k \in \{0,1, \ldots M\}$. The assumption on $N$ and the definition of the $(q,M,N)$-double-response box-game give us that both (i) and (ii)  hold at the beginning of Phase $0$. Suppose now that we have reached the beginning of Phase $k$ for some $k< M$, and that (i) and (ii) are both satisfied. On each of his turns throughout Phase $k$, BoxBreaker chooses some $r\in[q]$ and removes $r$ boxes from the board; BoxMaker's response will then be to claim one item each from $2r$ different boxes from which she has previously claimed $k$ items, unless there are strictly fewer than $2r$ such boxes left, in which case she claims $2r$ arbitrary items and declares Phase $k$ over.

Let $A$ and $B$ denote the number of boxes remaining on the board from which BoxMaker has claimed $k+1$ and $k$ items respectively. Set $S=2A+B$, and consider how $S$ changes throughout Phase $k$. At the start of Phase $k$, our inductive assumption (ii) tells us $A\geq 0$ and $B \geqslant 4(q+2)^{M-k}$, so that $S \geqslant 4(q+2)^{M-k}$. On each of his turns during Phase $k$, BoxBreaker removes $r$ boxes for some $r \in [q]$, thereby reducing $S$ by at most $2r$.  If  $B\geq 2r$, then BoxMaker's response ensures that $S$ increases back by $2r$. If on the other hand $B<2r$, then BoxMaker first chooses her items arbitrarily, in which case $S$ potentially does not change, and then declares Phase $k$ to be over.  This implies that at the end of BoxMaker's last turn of Phase $k$ we have $S \geqslant 4(q+2)^{M-k}-2q$ and $B < 2q$.  In particular we have
\begin{equation}
A = \frac{S-B}{2} > \frac{4(q+2)^{M-k}-4q}{2} \geqslant 4(q+2)^{M-k-1}, \nonumber
\end{equation}
and it is BoxBreaker's turn to play, so that (i) and (ii) are both satisfied at the beginning of Phase $k+1$, as required.
\end{proof}


\begin{proof}[Proof of Theorem \ref{theorem: (p,2p)-game on integer lattice}]
Before we get into the technical details, let us give an outline of Breaker's winning strategy. We may assume without loss of generality that $q=2p$ --- any edges Breaker gets to claim above $2p$ can be played arbitrarily without prejudice to Breaker. Recall that the $\ell_{\infty}$-norm of a point $\mathbf{u}=(u_1, u_2)$ in $\mathbb{Z}^2$ is  $\| \mathbf{u}\|_{\infty}=\max\left(\vert u_1\vert, \vert u_2\vert \right)$.

Let us denote the origin of $\mathbb{Z}^2$ by $\mathbf{0}$. Breaker will restrict his attention to the finite subset of the square lattice induced by vertices at $\ell_{\infty}$ distance at most $N(p+1)$ from the $\mathbf{0}$, for some suitably chosen $N$. He partitions this subset into square annuli, where the $k$-th annulus $A(k)$ corresponds to vertices with $\ell_{\infty}$ norm between $k(p+1)$ and $(k+1)(p+1)$. Note that for such annnuli, any path in $\mathbb{Z}^2$ from the inner square to the outer one must contain at least $p+1$ edges.

Each of these annuli can be further subdivided into a set of four rectangular strips $R(k)$ and a set $C(k)$ of four `corners'. Breaker will play double-response games with Maker on each of the $R(k)$ and on the union of the $C(k)$: for every edge that  Maker claims from $R(k)$ in her turn, Breaker will claim two suitably chosen edges from $R(k)$ in his response; and for every edge that Maker claims from a corner in her turn, Breaker will claim two edges from suitably chosen corners in his response.

By Proposition~\ref{proposition: double-response}, Breaker has a strategy in (a variant of) the $p$-double-response game on $R(k)$ for preventing Maker from claiming the edges of a path crossing a strip in $R(k)$ from the inner square of the annulus $A(k)$ to its outer square. Furthermore, our choice of $N$ will ensure that by Lemma~\ref{lemma: q-double-response-box-game} Breaker has a strategy for (a variant of) the $p$-double-response box-game played on the collection of corners $C(0), C(1), \ldots C(N-1)$ that guarantees he is able to claim all the edges of some $C(k_0)$.  (The crucial property of the $C(k)$ for this argument to work is that, unlike the $R(k)$, the sizes of the $C(k)$ is constant.) By following these double-response strategies, Breaker will be able to ensure  the component of $\mathbf{0}$ is eventually wholly contained inside some finite set $\bigcup_{k\leq k_0} A(k_0)$, $k_0 \leq N-1$, and hence that he wins the percolation game.

We now fill in the details. Set $N= 4(p+2)^{8p}$. As discussed above, for $k\in\{0, 1,\ldots N-1\}$, let $A(k)$ denote the annulus $A(k)=\{\mathbf{u}:\  \|\mathbf{u}\|_{\infty} \in[k(p+1), (k+1)(p+1)] \}$. Inside $A(k)$, let $R'_1(k)$ denote the subgraph of $\mathbb{Z}^2$ induced by the vertices 
\begin{equation}
\{(x,y):\   \vert x\vert \leq k(p+1), \ k(p+1)\leq y \leq (k+1)(p+1) \}. \nonumber
\end{equation}
We then let $R_{1}(k)$ denote the subgraph of $R'_1(k)$ obtained by removing any edge of $R'_1(k)$ that lies inside $A(k+1)$ or $A(k-1)$. We further let $R_2(k)$, $R_3(k)$ and $R_4(k)$ be the subgraphs obtained by rotating $R_1(k)$ around the origin by  angles of $\pi/2$, $\pi$ and $3\pi/2$ respectively. Thus $R_1(k)$, $R_2(k)$, $R_3(k)$ and $R_4(k)$ are respectively part of the topmost, leftmost, bottommost and rightmost rectangular strips in the square annulus $A(k)$. Let $R(k)$ denote the union of the $R_i(k)$, $i\in [4]$. Denote by $L'(k)$ the collection of all edges which are from a vertex in $R(k)$ to a vertex in $A(k)\setminus V(R(k))$. We then let $L(k)$ denote the subset of $L'(k)$ obtained by removing any edge that lies in $A(k-1)$ or $A(k+1)$.  The set of dual edges corresponding to $L(k)$ consist of four L-shapes, each containing $2p$ edges. Let us write $R^{\star}_{i}(k)$ (respectively $L^{\star}(k)$) for the set of edges dual to $R_{i}(k)$ (respectively $L(k)$).  These four strips $R^{\star}_i(k)$, $i\in[4]$, may be viewed as (rotated, translated) subgraphs of $\mathbb{Z}\times P_{p+1}$.  See Figure~\ref{Fig2} for an example of these sets. 
\begin{figure}[ht]
	\centering
\includegraphics[scale=1]{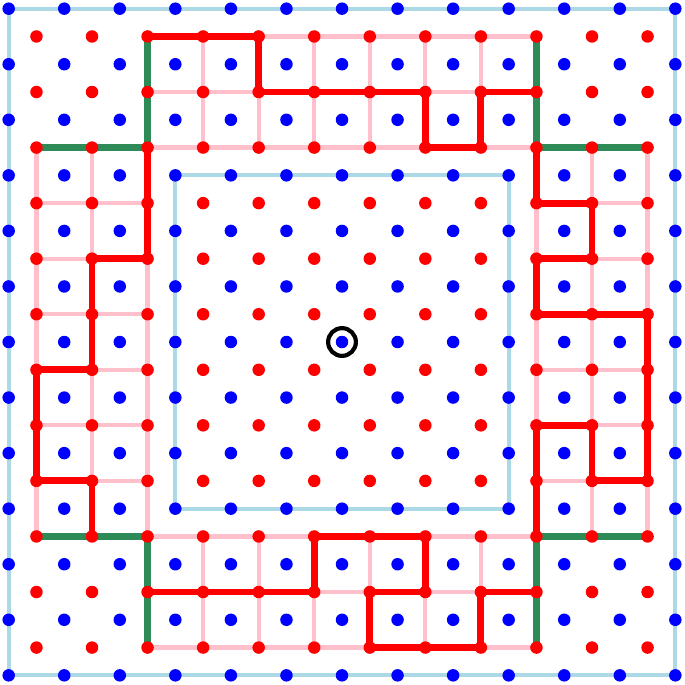}
	\caption{This figure shows some of the sets involved in our winning Breaker strategy in the special case $p = 2$. The blue vertices above represent the vertices of $\mathbb{Z}^2$, while the red vertices represent the dual vertices from $\left(\mathbb{Z}^2 \right)^{\star}=\mathbb{Z}^2+(0.5, 0.5)$. We see in the figure the boundaries of the annulus $A(1)$ (in light blue), the edge-set $L^{\star}(1)$ (in green), and the sets $R^{\star}_{1}(1)$, $R^{\star}_{2}(1)$, $R^{\star}_{3}(1)$ and $R^{\star}_{4}(1)$ (in pink), as well as crossing paths $\pi_i$ (in red) for each of the $R^{\star}_{i}(1)$.   The circled vertex is the origin of $\mathbb{Z}^{2}$. Note that the union of the red dual edges with the green dual edges contains a dual cycle around the origin; claiming the corresponding edges constitutes a win for Breaker in the percolation game.}
	\label{Fig2}
\end{figure}

We can now formally describe Breaker's winning strategy. Set $L=\bigcup_{k\leq N-1}L(k)$. 
In any given turn, Maker claims $p_{L}$ edges from $L$ and $p_{i,k}$ edges from $R_i(k)$ for each $i\in[4]$ and each $k\in\{0,1,\ldots N-1\}$, where $p_{L}+\sum_{i,k}p_{i,k}\leq p$. Breaker treats $L$ as an instance of the box-game, where the boxes are the $L(k)$, each of which contains exactly $8p$ elements. He responds to Maker's moves on $L$ by claiming $2p_L$ edges from $L$ according to the winning strategy for BoxMaker in the $(p, 8p, N)$-double-response box-game given in Lemma~\ref{lemma: q-double-response-box-game} (which is possible since we chose $N\geq 4(p+2)^{8p}$). Furthermore, for each $i,k$, Breaker treats $R^{\star}_i(k)$ as a (rotated, translated and truncated) instance of the $p$-double-response game on $\mathbb{Z}\times P_{p+1}$; he responds to Maker's moves on $R_i(k)$ by claiming $2p_{i,k}$ edges from $R^{\star}_i(k)$ according to the winning strategy for the horizontal player $\mathcal{H}$ given by Proposition~\ref{proposition: double-response}. This uses up $2p_{L}+\sum_{i,k}2p_{i,k}\leq 2p$ of the edges Breaker is allowed to claim. If he has any edges left over (for example if Maker played fewer than $p$ edges inside $\bigcup_{k\leq N-1}A(k)$ in her preceding turn), then Breaker claims some arbitrary edges inside $S_p:=\bigcup_{k\leq N-1}A(k)$. This ensures that between successive turns for Breaker, at least $2p$ edges of $S_p$ are claimed by the players. Thus within $T \leq 4(2N+1)^2/2p$ turns, all edges inside $S_P$ have been claimed by one of the players.

Since Breaker's strategy on $L$ is a winning one we have that by turn $T$ of the game, there exists some $k_0\in\{0,1,\ldots N-1\}$ such that Breaker has claimed all of the edges from $L(k_0)$.  Since Breaker's strategy on each of the $R^{\star}_i(k_0)$ is a winning one, by turn $T$ Maker has failed for every $i\in [4]$ to claim the edges of a path in $R_i(k_0)$ from the inner square of $A(k_0)$ to its outer square. By a standard result on planar duality (see e.g.~\cite[Chapter 3, Lemma 1]{BollobasRiordan06}), this implies Breaker has claimed edges corresponding to dual paths $\pi_i$, $i\in [4]$, where $\pi_1$ and $\pi_3$ are left-right crossing paths of $R^{\star}_1(k_0)$ and $R^{\star}_3(k_0)$ and $\pi_2$ and $\pi_4$ are top-bottom crossing paths of $R^{\star}_2(k_0)$ and $R^{\star}_4(k_0)$ respectively. Now, the union of the $\pi_i$ together with $L^{\star}(k_0)$ contains a dual cycle $\sigma$ inside $A(k_0)$ surrounding the origin (see Figure \ref{Fig2} for an example in the case $p = 2$, $k_0=1$).
By turn $T$ Breaker has thus claimed edges whose dual is such a cycle $\sigma$. This implies that by that turn, the origin is contained in a component strictly contained inside the finite square $\bigcup_{k\leq k_0}A(k)\subseteq S_p$. In particular, Breaker has won the $(p,2p)$-percolation game.
\end{proof}

\section{Concluding remarks and open questions}\label{section: concluding remarks and open questions}
There are many questions arising from our work. Given our motivation for looking at Maker-Breaker games in the context of percolation, the one we would most like answered is the question posed in the introduction and restated below, which asks about a Maker-Breaker analogue of the Harris--Kesten theorem:
\begin{question}\label{question: HK for Maker-Breaker}
	Consider the $(p,q)$-percolation game on $\mathbb{Z}^2$. Does there exist a \emph{critical bias} $b_{\star}> 0$ such that for any $\varepsilon>0$ and all $p$ sufficiently large,  $(b_{\star}+\varepsilon)p<q$ implies Breaker has a winning strategy, while $(b_{\star}-\varepsilon) p>q$ implies Maker has a winning strategy?
\end{question}


More generally, for fixed $p$ it is natural to ask for the exact threshold value $q$ at which the $(p,q)$-percolation game goes from being a Maker win to a Breaker win under optimal play.
\begin{question}\label{question: q-threshold}
	Let $p \geqslant 2$ be a fixed integer. What is the least integer $q$ such that Breaker has winning strategy for the $(p,q)$-percolation game on the square integer lattice $\mathbb{Z}^2$?
\end{question}
Theorem~\ref{theorem: (p,2p)-game on integer lattice} shows this threshold is at most $2p$, while Theorem~\ref{theorem: (2q,q)-game on integer lattice} shows it is at least $p/2$. One natural way of investigating Question~\ref{question: q-threshold} would be to settle who has a winning strategy in the $(p,q)$-percolation game for small values of $p$ and $q$. The smallest open case is when $p=q=2$. We believe this $(2,2)$-case already contains many of the difficulties inherent to the general case.
\begin{question}\label{question: (2,2)-case}
	Which of Maker and Breaker has a winning strategy for the $(2,2)$-percolation game on $\mathbb{Z}^2$?
\end{question}
\noindent Another appealing family of special cases are $(p,q)=(k, 2k-1)$ and $(p,q)=(2k-1,k)$ for $k\in \mathbb{Z}_{\geq 2}$. Does having almost but not quite twice the power of your opponent guarantee you have a winning strategy? In particular, we have the following questions:
\begin{question}\label{question: (p, 2p-1), (2q-1,q)}
	\begin{enumerate}[(i)]
		\item Which of Maker and Breaker has a winning strategy for the $(3,2)$-percolation game on $\mathbb{Z}^2$?
		\item Which of Maker and Breaker has a winning strategy for the $(2,3)$-percolation game on $\mathbb{Z}^2$?
	\end{enumerate}
\end{question} 

In a more theoretical direction, one could ask about the existence of a critical bias in a more general setting (possibly ignoring some finite number of `bad' pairs $(p,q)$).
\begin{question}\label{question: critical bias}
Let $\Lambda$ be a vertex-transitive infinite, locally finite, connected graph. Does there exist a critical bias $b_{\star}=b_{\star}(\Lambda)\geq 0$ for the $(p,q)$-percolation game on $\Lambda$, in the sense that for all $\varepsilon>0$ there exists $p_0\in \mathbb{N}$ such that for all $p\geq p_0$, $(b_{\star}+\varepsilon)p < q$ implies Breaker has a winning strategy  while $(b_{\star}-\varepsilon)p > q$ implies Maker has a winning strategy?
\end{question}
To answer such a question, it would be good to understand what, if anything, the existence of a winning strategy for one of the players in the $(p,q)$-percolation game says about the $(mp, mq)$-percolation game, and vice-versa, where $m\in \mathbb{N}$. We observe that if any relationship between the two games does exist, it is not entirely trivial--- note for instance that, as we showed, Maker wins the $(1,1)$-percolation game on $\mathbb{Z}^2$ even if she forfeits her first turn, while clearly Breaker will win the $(4,4)$-percolation game on the same graph if he is allowed to play first.

Another interesting theoretical question is whether Breaker can afford to skip his turn early on if he has a winning strategy. Indeed, observe that our proof of Theorem~\ref{theorem: bi-regular tree}, establishes that if $p(b-2)-\lceil\frac{p}{a}\rceil(b-a)<q$, then Breaker has a winning strategy in a very strong sense: he wins even if Maker is allowed to pre-emptively claim some arbitrary finite number of edges before the start of the $(p,q)$-percolation game on $T$. This is somewhat reminiscent of the phenomenon in percolation theory whereby the existence of an infinite connected component is independent of what happens in any finite region (this is a special case of the celebrated Kolmogorov $0$-$1$ law). It is very  natural to ask whether a similar phenomenon might also occur for more general percolation games.
\begin{question}\label{question: zero-one law}
	Let $\Lambda$ be a connected unrooted infinite graph.  Allow Maker to select a vertex $v_0$ of $\Lambda$ to be the root, and then play the $(p,q)$-percolation game on $\Lambda$ as normal. Suppose Breaker has a winning strategy. Is it true that Breaker still has a winning strategy even if Maker is allowed to pre-emptively claim some finite number of edges before the game starts?
\end{question}

In this paper, we focussed on Bethe lattices and integer lattices, though we gave a definition of our percolation game which is valid in a much more general context. It is natural to ask what happens on other lattices, such as the triangular lattice. We also focussed on games where the players take turns claiming edges in the graph (which corresponds to \emph{bond percolation} in percolation theory); it would be interesting to study the variant where they take turns claiming vertices in the graph instead (which would be the analogue of \emph{site percolation}). Observe this is a more general class of games, since claiming edges in a graph $\Lambda$ is equivalent to claiming vertices in the line graph of $\Lambda$. Of particular interest would be percolation game analogues of the celebrated game of Hex, which corresponds to site percolation on the triangular lattice.

Our proof of Theorem~\ref{theorem: (p,2p)-game on integer lattice} shows that in the $(p,2p)$-percolation game on $\mathbb{Z}^2$, Breaker can ensure the origin is contained in a connected component of order $p^{16p +O(1)}$ by the end of the game. It is natural to ask whether in fact Breaker can do a lot better: can he ensure, for instance, that the origin is contained in a component of order subexponential in $p$?
\begin{question}
	Set $M_p$ to be the least integer such that Breaker has a winning strategy in the $(p, 2p)$-percolation game on $\mathbb{Z}^2$ ensuring the origin is contained in a component of order at most $M_p$. What is the asymptotic behaviour of $M_p$ as $p\rightarrow \infty$?
\end{question} 
\noindent Our analysis in the proof of Theorem~\ref{theorem: (p,2p)-game on integer lattice} is certainly wasteful, so we cannot hazard a guess as to what the right answer to this question might be.

Another open problem is to extend our results on trees. We determined which of Maker and Breaker has a winning strategy for the $(p,q)$-percolation game on regular and bi-regular trees. Is there a general criterion for determining who has a winning strategy on arbitrary infinite trees? One difficulty here may be that global invariants such as the branching number may ignore some local bottlenecks, which don't affect the asymptotic growth of the tree but do allow Breaker to overwhelm Maker and cut off the root. One way to address this might be to play the percolation game on non-rooted trees and allow Maker in her first turn to choose the location of the root she has to protect.

Finally, given our motivation for looking into percolation games, it would be natural to ask what happens when they are played on random boards: given a rooted, connected infinite graph $\Lambda$, let $\Lambda_{\theta}$ be a $\theta$-random subgraph obtained by including each edge of $\Lambda$ in $\Lambda_{\theta}$ with probability $\theta$, independently of all other edges. For suitable values of $\theta\in [0,1]$, the subgraph $\Lambda_{\theta}$ will \emph{almost surely} (a.s.) contain an infinite connected component. One can then play a percolation game on $\Lambda_{\theta}$ by allowing Maker in her first turn to choose the location of a root, and then playing the $(p,q)$-percolation game as normal. In particular, given Maker wins the $(1,1)$-percolation on $\mathbb{Z}^2$, one could ask how much one can `thin' the board (i.e. how small a $\theta$ we can take) while ensuring she a.s. retains her advantage in the $(1,1)$-percolation game over Maker.
\begin{question}
	Set
	\[\theta_{\star}=\inf\left\{\theta\in \bigl(\frac{1}{2},1\bigr]: \  \textrm{Maker a.s. has a winning strategy for the $(1,1)$-percolation game on $\left(\mathbb{Z}^2\right)_{\theta}$} \right\}.\]
	What is the value of ${\theta}_{\star}$ ?
\end{question}
\subsection*{Acknowledgements}
We are very grateful to the two anonymous referees whose scholarship and careful work greatly enhanced the correctness and presentation of the paper.

This research was made possible thanks to the support of the Swedish Research Council grant VR 2016-03488, which we gratefully acknowledge.

\end{document}